
\documentclass[11pt,twoside]{amsart}
\usepackage{latexsym,amssymb,amsmath}
\usepackage{tikz}
\usepackage{float}
\usepackage{amsmath,amsfonts,amsthm,amssymb} 
\usepackage{latexsym,amscd,verbatim,alltt,array}
\usepackage{enumerate}
\usepackage{tikz}
\usetikzlibrary{arrows}
\usetikzlibrary{shapes, shadows, arrows}

\textwidth=16.00cm
\textheight=22.00cm
\topmargin=0.00cm
\oddsidemargin=0.00cm 
\evensidemargin=0.00cm
\headheight=0cm
\headsep=1cm
\headsep=0.5cm 
\numberwithin{equation}{section}
\hyphenation{semi-stable}
\setlength{\parskip}{3pt}

\newtheorem{theorem}{Theorem}[section]
\newtheorem{lemma}[theorem]{Lemma}
\newtheorem{proposition}[theorem]{Proposition}
\newtheorem{corollary}[theorem]{Corollary}

\theoremstyle{definition}
\newtheorem{definition}[theorem]{Definition} 
 
\newtheorem{remark}[theorem]{Remark}
\newtheorem{example}[theorem]{Example}

\begin{document}


\title[Unmixed and Cohen--Macaulay weighted oriented graphs]{Unmixed
and Cohen--Macaulay weighted oriented K\"onig graphs}  

\thanks{The first author was
supported by a scholarship from CONACYT, Mexico. The second and third
author were supported by SNI, Mexico.}

\author{Yuriko Pitones}
\address{
Departamento de
Matem\'aticas\\
Centro de Investigaci\'on y de Estudios
Avanzados del
IPN\\
Apartado Postal
14--740 \\
07000 Mexico City, D.F.
}
\email{ypitones@math.cinvestav.mx}

\author{Enrique Reyes}
\address{
Departamento de
Matem\'aticas\\
Centro de Investigaci\'on y de Estudios
Avanzados del
IPN\\
Apartado Postal
14--740 \\
07000 Mexico City, D.F.
}
\email{ereyes@math.cinvestav.mx}

\author{Rafael H. Villarreal}
\address{
Departamento de
Matem\'aticas\\
Centro de Investigaci\'on y de Estudios
Avanzados del
IPN\\
Apartado Postal
14--740 \\
07000 Mexico City, D.F.
}
\email{vila@math.cinvestav.mx}

\keywords{Weighted oriented graph, edge ideal, well-covered graph,
Cohen--Macaulay ideal, unmixed ideal, K\"{o}nig property.}
\subjclass[2010]{Primary 13F20; Secondary 05C22, 05E40, 13H10.} 
\begin{abstract} 
Let $D$ be a weighted oriented graph, whose underlying graph is $G$, and
let $I(D)$ be its edge ideal. If $G$ has no 
$3$-, $5$-, or $7$-cycles, or $G$ is  K\"{o}nig, we characterize when
$I(D)$ is unmixed. If $G$ has no $3$- or
$5$-cycles, or $G$ is
K\"onig, we
characterize when $I(D)$ is Cohen--Macaulay. We prove that
$I(D)$ is unmixed if and only if $I(D)$ is Cohen--Macaulay when 
$G$ has girth greater than $7$ or $G$ is K\"onig and
has no $4$-cycles.    
\end{abstract}

\maketitle 

\section{Introduction}\label{intro-section}
Let $G=(V(G), E(G))$ be a simple graph without isolated vertices 
with vertex set $V(G)$ and edge
set $E(G)$. A {\it weighted oriented graph\/} $D$, whose {\it underlying
graph\/} is $G$, is a triplet $(V(D),E(D),w)$ where $V(D)=V(G)$,
$E(D)\subset V(D)\times V(D)$ such that 
$$
E(G)=\{\{x,y\}\mid (x,y)\in
E(D)\},
$$ 
$|E(D)|=|E(G)|$, and $w$ is a \textit{weight function} 
$w\colon V(D) \to\mathbb{N}_+$. Here $\mathbb{N}_+$ denotes the set 
of positive integers. The \textit{vertex set} of $D$ and the \textit{edge set} of $D$
are $V(D)$ and $E(D)$, respectively. Sometimes for simplicity we denote
these sets by $V$ and $E$, respectively. The \emph{weight} of $x\in V$
is $w(x)$ and the set of vertices $\{x\in V\mid w(x)>1\}$ is denoted by
$V^{+}$. If $V(D)=\{x_1,\ldots,x_n\}$, we can regard each vertex $x_i$ as a
 variable and consider the polynomial ring
 $R=K[x_1,\ldots,x_n]$ over a ground field $K$. The
 \textit{edge ideal} of $D$, introduced in \cite{vivares,WOG}, 
is the ideal of $R$ given by 
$$I(D):=(x_{i}x_{j}^{w(x_{j})}:(x_{i},x_{j})\in E(D)).$$
\quad If $w(x)=1$ for each $x\in V(D)$, then $I(D)$ is the usual edge
ideal $I(G)$ of the graph $G$ \cite{cm-graphs}, which has been
extensively studied in the literature (see
\cite{graphs-rings,monalg-2} and the references therein). The
motivation to study $I(D)$ comes from coding theory, see
\cite[p.~536]{reyes-vila} and \cite[p.~1]{WOG}. 

In general, edge
ideals of weighted oriented 
graphs are different from edge ideals of 
edge-weighted (undirected) graphs defined by Paulsen and
Sather-Wagstaff \cite{PS}.  Consider the weighted
oriented graph $D' = (V,E,w')$ with $w'(x) = 1$ if $x$ is a
\textit{source} vertex (i.e., a vertex
with only outgoing edges) and $w'(x) =
w(x)$ if $x$ is not a source vertex. 
Then, $I(D') = I(D)$. In view of this throughout this paper, we will
always assume that if $x$ is a source, then $w(x) = 1$. 

The projective
dimension, regularity, and algebraic and
combinatorial properties of edge ideals of
weighted oriented graphs have been studied in
\cite{vivares,reyes-vila,depth-monomial,
WOG,Zhu-Xu,Zhu-Xu-Wang-Zhang}. The first major
result about $I(D)$ is an explicit combinatorial
expression of Pitones, Reyes and Toledo \cite[Theorem~25]{WOG} 
for the irredundant decomposition of $I(D)$ as a finite intersection of
irreducible monomial ideals. If $D$ is transitive, then Alexander duality holds
for $I(D)$ \cite[Theorem~4]{vivares}.

The edge ideal $I(D)$ is \textit{unmixed} if all its associated primes have the same height and
$I(D)$ is \textit{Cohen--Macaulay} if $R/I(D)$ is a Cohen--Macaulay
ring in the sense of \cite{Mats}. We say that $D$ is \textit{unmixed} (resp.
\textit{Cohen--Macaulay}) if $I(D)$ is unmixed (resp. Cohen--Macaulay). As pointed out in
\cite[p.~536]{reyes-vila}, the Cohen--Macaulay property and the unmixed property of $I(D)$ are
independent of the weight we assign to a \emph{sink} vertex (i.e.,
a vertex with only incoming edges). For this reason, we shall always
assume---when studying these properties---that sinks have weight $1$.

The graph $G$ is \textit{well-covered} if all maximal stable sets of
$G$ have the same cardinality and the graph $G$ is  \textit{very
well-covered} if $G$ is well-covered and $|V(G)|=2\tau(G)$, where
$\tau(G)$ is the cardinality of a minimum vertex cover of $G$.  
The class of very well-covered graphs contains in
particular the bipartite well-covered graphs studied by Ravindra
\cite{ravindra} and more recently revisited in \cite{unmixed}. One of
the properties of very well-covered graphs that will be used
in this paper is that they can be classified using combinatorial
properties of a perfect matching as shown by a central result of
Favaron \cite[Theorem~1.2]{favaron} (Theorem~\ref{konig}). 

The content of this paper is as follows. In
Section~\ref{prelim-section} we  
present some well-known
results about edge ideals. We 
denote the in- and out-neighborhood of a vertex $a$ by 
$N_{D}^{-}(a)$ and $N_{D}^{+}(a)$, respectively, and the neighborhood 
of $a$ by $N_{D}(a)$ (Definition~\ref{in-out-neighborhood}).

Let $D=(V(D),E(D),w)$ be a weighted oriented graph without isolated vertices whose
underlying graph is $G$. The graph $G$ is \textit{K\"onig} if
$\tau(G)$ is the \textit{matching number} of $G$, that is, the maximum
cardinality of a matching of $G$.  
In Section~\ref{unmixed-section}, we characterize in graph theoretical terms the unmixed 
property of $I(D)$ when $G$ is K\"onig. 

One of our main results is: 

\noindent {\bf Theorem~\ref{prop-unmixed}.}\textit{ 
If $G$ is K\"onig, then $I(D)$ is unmixed if and only if 
the following conditions hold: 
\begin{enumerate}[{\rm (1)}]
\item $G$ has a perfect matching $P$ with property {\bf(P)}, that is,
$G$ has a perfect matching $P$ such that if $\{a,b\}$, $\{a^{\prime},b^{\prime}\}\in E(G)$, and
$\{b,b^{\prime}\}\in P$,  
then $\{a,a^{\prime}\}\in E(G)$.  
\item If $a$ is a vertex of $D$ with $w(a)>1$, $b^{\prime}\in
N_{D}^{+}(a)$, and $\{b,b^{\prime}\}\in P$, then $N_{D}(b)\subset
N_{D}^{+}(a)$.
\end{enumerate}}
We also show that conditions (1) and (2) of Theorem~\ref{prop-unmixed}
characterize the unmixed property of $I(D)$ when 
$G$ is a graph without $3$-, $5$-, and $7$-cycles
(Proposition~\ref{Coro-37}). We give other characterizations of the
unmixed property of $I(D)$ when $G$ is a K\"onig graph
(Corollary~\ref{jun24-19}) or $G$ is very well-covered (Corollary~\ref{sep15-19}). 

The Cohen--Macaulay property of $I(D)$ is studied in
Section~\ref{c-m-section}. We give 
a combinatorial characterization of this property when $G$ is K\"onig.
 
Another of our main results is: 

\noindent {\bf Theorem~\ref{theoremCM}.}\textit{
If $G$ is K\"{o}nig, then $I(D)$ is Cohen--Macaulay if and only if $D$
satisfies the following two conditions:  
\begin{enumerate}[{\rm (1)}]
\item $G$ has a perfect matching $P$ with property {\bf(P)} and $G$ has no $4$-cycles 
with two edges in $P$.
\item If $a\in V(D)$, $w(a)>1$, $b^{\prime}\in N_{D}^{+}(a)$, 
and $\{b,b^{\prime}\}\in P$, then $N_{D}(b)\subset N_{D}^{+}(a)$.
\end{enumerate}}

We also show that conditions (1) and (2) of Theorem~\ref{theoremCM}
characterize the Cohen--Macaulay property of $I(D)$ when 
$G$ is a graph without $3$- and $5$-cycles (Proposition~\ref{Coro-45}).
In general any graded Cohen--Macaulay ideal is unmixed \cite{Mats}. If
$G$ is a K\"onig graph without $4$-cycles or
$G$ has girth greater than $7$, we prove 
that $I(D)$ is unmixed if and only if $I(D)$ is 
Cohen--Macaulay (Corollaries~\ref{jun23-19} and
\ref{jun25-19}). For graphs this improves a result of
\cite[Corollary~2.19]{susan-reyes-vila} showing that unmixed K\"onig
clutters without $3$- and $4$-cycles are
Cohen--Macaulay. If $I(D)$ is Cohen--Macaulay, then $I(D)$ is unmixed and $I(G)$ is
Cohen--Macaulay (see \cite[Theorem~2.6]{Radical-Herzog} and 
\cite[Proposition 51]{WOG}). 
The converse is a conjecture 
\cite[Conjecture~53]{WOG}. We prove this conjecture, when $G$ has no
$3$- or $5$- cycles, or $G$ is K\"onig (Corollary~\ref{conjecture-WOG}).

Graphs with a whisker (i.e., pendant edge) attached to 
each vertex are K\"onig \cite[p.~277]{monalg-2},  
very well-covered graphs are also K\"onig (Remark~\ref{2star}), and bipartite
graphs are K\"onig and have no odd cycles \cite{Har}. Then some of our
results generalize those of \cite{vivares,reyes-vila,WOG,unmixed}. More
precisely, Theorem~\ref{prop-unmixed} (resp.
Theorem~\ref{theoremCM}) generalizes the unmixed criteria of
\cite[Theorem~46]{WOG} and \cite[Theorem~1.1]{unmixed} (resp.
Cohen--Macaulay criterion of \cite[Theorem~5.1]{reyes-vila}) 
 for weighted oriented bipartite graphs. 
From Corollary~\ref{jun23-19} and Proposition~\ref{Coro-45},
 we recover the Cohen--Macaulay criterion of \cite[Theorem~5]{vivares}
 for weighted oriented trees. Finally in Section 5, we show some
 weighted oriented graphs that exemplify our results.  

For all unexplained terminology and additional information,  we refer to 
\cite{digraphs} for the theory of digraphs, 
and \cite{graphs-rings,monalg-2} for the theory of
edge ideals and monomial ideals.

\section{Preliminaries}\label{prelim-section}
In this section we give some definitions and present some well-known
results that will be used in the following sections.  
To avoid repetitions, we continue to employ 
the notations and
definitions used in Section~\ref{intro-section}.

\begin{definition}\label{in-out-neighborhood}
Let $x$ be a vertex of a weighted oriented graph $D$. The sets
$$
N_{D}^{+}(x):=\{y\in V(D)\mid(x,y)\in E(D)\}\ \mbox{ and }\ 
N_{D}^{-}(x):=\{y\in\ V(D)\mid(y,x)\in E(D)\}
$$
are called the {\it out-neighborhood\/} and the 
{\it in-neighborhood\/} of $x$, respectively. The {\it
neighborhood\/} of $x$ is the set 
$N_{D}(x):=N_{D}^{+}(x)\cup N_{D}^{-}(x)$. If $A\subset V(D)$, we 
set $N_D^+(A):=\bigcup_{a\in A} N_D^+(a)$. 
\end{definition}

\begin{definition}
A {\it vertex cover\/} $C$ of a weighted oriented graph $D$ is a subset of $V(D)$, such that if
$(x,y)\in E(D)$, then $x\in C$ or $y\in C$. A vertex cover $C$ of $D$
is {\it minimal\/} if each proper subset of $C$ is not a vertex cover
of $D$.   
\end{definition}

If $G$ is the underlying graph of $D$, then $C$ is a (minimal) vertex
cover of $G$ if and only if $C$ is a (minimal) vertex cover of $D$. 

\begin{definition}\label{L-sets}
Let $C$ be a vertex cover of a weighted oriented graph $D$, we define
the following three sets that form a partition of $C$: 
\begin{itemize}
\item $L_1(C):=\{x\in C\mid N_{D}^{+}(x)\cap C^{c}\neq \emptyset \}$,
 where $C^{c}:=V(D)\setminus C$, 
\item $L_2(C):=\{x\in C\mid\mbox{$x\notin L_1(C)$ and
$N^{-}_{D}(x)\cap C^c\neq\emptyset$}\}$, and 
\item$L_3(C):=C\setminus(L_1(C)\cup L_2(C))$.
\end{itemize}
\end{definition}

\begin{lemma}\cite[Proposition~5]{WOG}\label{Rem-L3}
Let $C$ be a vertex cover of a weighted oriented graph $D$ and let $x$
be a vertex in $C$,  
then $x\in L_3(C)$ if and only if $N_{D}(x)\subset C$.
\end{lemma}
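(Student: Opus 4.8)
The plan is to prove both implications directly by unwinding the definitions of the sets $L_1(C)$, $L_2(C)$, and $L_3(C)$ in Definition~\ref{L-sets}, since membership in $L_3(C)$ is by construction equivalent to non-membership in both $L_1(C)$ and $L_2(C)$. No combinatorial or algebraic input beyond the definitions should be required.

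First I would record the two elementary equivalences coming straight from Definition~\ref{L-sets}. For a vertex $x\in C$, the condition $x\notin L_1(C)$ says exactly that $N_D^{+}(x)\cap C^c=\emptyset$, i.e. $N_D^{+}(x)\subset C$. Likewise, the defining clause of $L_2(C)$ already carries the hypothesis $x\notin L_1(C)$; hence, once we know $x\notin L_1(C)$, the statement $x\notin L_2(C)$ reduces to $N_D^{-}(x)\cap C^c=\emptyset$, that is $N_D^{-}(x)\subset C$.

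With these in hand, the forward implication is immediate: if $x\in L_3(C)$, then by definition $x\notin L_1(C)$ and $x\notin L_2(C)$, so the two equivalences give $N_D^{+}(x)\subset C$ and $N_D^{-}(x)\subset C$; hence $N_D(x)=N_D^{+}(x)\cup N_D^{-}(x)\subset C$. For the converse, assume $N_D(x)\subset C$. Then $N_D^{+}(x)\subset C$ forces $N_D^{+}(x)\cap C^c=\emptyset$, so $x\notin L_1(C)$; and $N_D^{-}(x)\subset C$ gives $N_D^{-}(x)\cap C^c=\emptyset$, so the defining condition of $L_2(C)$ fails and $x\notin L_2(C)$. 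Therefore $x$ lies in $C\setminus(L_1(C)\cup L_2(C))=L_3(C)$.

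The main (and essentially only) subtlety to watch is the asymmetric way $L_2(C)$ is defined: its membership condition already presupposes $x\notin L_1(C)$, so when negating it one must keep in mind that this hypothesis is automatically available in both directions of the argument. Beyond tracking that negation carefully, the statement is a direct logical consequence of the fact that $\{L_1(C),L_2(C),L_3(C)\}$ partitions $C$, so I do not anticipate any genuine obstacle.
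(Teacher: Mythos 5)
Your proof is correct: the paper gives no argument for this lemma, simply citing \cite[Proposition~5]{WOG}, and your unwinding of Definition~\ref{L-sets} is exactly the intended (and essentially only possible) argument, since $L_3(C)=C\setminus(L_1(C)\cup L_2(C))$ makes the statement a purely logical consequence of the definitions. You also correctly handle the one subtle point, namely that the defining condition of $L_2(C)$ already presupposes $x\notin L_1(C)$, so that its negation, in the presence of $x\notin L_1(C)$, reduces to $N_D^{-}(x)\subset C$.
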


\begin{definition}\label{strong-cover-defn}
A vertex cover $C$ of a weighted oriented graph $D$ is {\it strong\/} if for each $x\in L_3(C)$
there is $(y,x)\in E(D)$ such that $y\in L_2(C)\cup L_{3}(C)$ and 
$y\in V^{+}$ (i.e., $w(y)>1$).
\end{definition}

\begin{theorem}\label{theorem42}{\rm \cite[Theorem~31]{WOG}}
Let $D$ be a weighted oriented graph and let $G$ be its underlying
graph. The following conditions are equivalent: 
\begin{enumerate}[{\rm (1)}]
\item $I(D)$ is unmixed.
\item All strong vertex covers of $D$ have the same cardinality. 
\item $I(G)$ is unmixed and $L_{3}(C)=\emptyset$ for each strong vertex cover $C$ of $D$.
\end{enumerate}
\end{theorem}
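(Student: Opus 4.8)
The plan is to establish the two equivalences $(1)\Leftrightarrow(2)$ and $(2)\Leftrightarrow(3)$ separately, handling the first with the irreducible decomposition of $I(D)$ and the second by elementary arguments on vertex covers. The essential external input is the explicit irredundant irreducible decomposition of Pitones, Reyes and Toledo \cite[Theorem~25]{WOG}, which I would invoke in the form $I(D)=\bigcap_{C}I_{C}$, where $C$ ranges over all strong vertex covers of $D$, each $I_{C}$ is an irreducible (hence primary) monomial ideal with support exactly $C$, so that $\sqrt{I_{C}}=(x\mid x\in C)=:\mathfrak{p}_{C}$ and $\operatorname{ht}(\mathfrak{p}_{C})=|C|$, and the decomposition is irredundant.

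For $(1)\Leftrightarrow(2)$ I would argue as follows. Since the displayed expression is an irredundant irreducible (thus primary) decomposition of a monomial ideal, the associated primes of $R/I(D)$ are precisely the radicals of the components; and because distinct strong covers $C\neq C'$ have distinct supports, they yield distinct radicals $\mathfrak{p}_{C}\neq\mathfrak{p}_{C'}$. Hence $\operatorname{Ass}(R/I(D))=\{\mathfrak{p}_{C}\mid C\ \text{strong}\}$. By definition $I(D)$ is unmixed exactly when all associated primes share a common height, and since $\operatorname{ht}(\mathfrak{p}_{C})=|C|$, this is equivalent to all strong vertex covers having the same cardinality, which is condition (2).

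The equivalence $(2)\Leftrightarrow(3)$ I would reduce to two observations about $L_{3}(C)$. By Lemma~\ref{Rem-L3}, $x\in L_{3}(C)$ exactly when $N_{D}(x)\subset C$; consequently, if $C$ is a \emph{minimal} vertex cover then $L_{3}(C)=\emptyset$ (otherwise $C\setminus\{x\}$ would still be a cover), and the requirement of Definition~\ref{strong-cover-defn} holds vacuously, so every minimal vertex cover is strong. Conversely, a strong cover $C$ with $L_{3}(C)=\emptyset$ must be minimal, since a deletable vertex $x\in C$ would force $N_{D}(x)\subset C$ and hence $x\in L_{3}(C)$. Granting these, $(3)\Rightarrow(2)$ follows because $I(G)$ unmixed gives all minimal covers a common size $\tau$, and the hypothesis $L_{3}(C)=\emptyset$ for every strong $C$ makes every strong cover minimal, hence of size $\tau$. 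For $(2)\Rightarrow(3)$, the minimal covers are strong, so (2) forces them all to have one size $\tau$, whence $I(G)$ is unmixed; and if some strong cover $C$ had $L_{3}(C)\neq\emptyset$ it would be non-minimal, so a minimal subcover $C_{0}\subsetneq C$ would satisfy $|C|>|C_{0}|=\tau$, contradicting (2).

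The step I expect to require the most care is the first one: pinning down exactly which features of \cite[Theorem~25]{WOG} are being used, namely that the decomposition is genuinely irredundant (so that no strong cover is lost and no spurious associated prime is introduced) and that each $I_{C}$ has support precisely $C$, giving $\sqrt{I_{C}}=\mathfrak{p}_{C}$ of height $|C|$. Once this is secured, the passage from \emph{all associated primes have equal height} to \emph{all strong covers have equal cardinality} is immediate, and the surrounding combinatorial equivalence is routine.
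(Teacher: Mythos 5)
The paper itself gives no proof of this statement: it is imported verbatim as \cite[Theorem~31]{WOG}, so there is no internal argument to compare against. Your proof is correct, and it follows essentially the route of the cited source: there, too, the irredundant irreducible decomposition $I(D)=\bigcap_{C} I_{C}$ over the strong vertex covers (\cite[Theorem~25]{WOG}) yields $\operatorname{Ass}(R/I(D))=\{\mathfrak{p}_{C}\mid C\ \text{strong}\}$, which together with $\operatorname{ht}(\mathfrak{p}_{C})=|C|$ gives $(1)\Leftrightarrow(2)$, while $(2)\Leftrightarrow(3)$ rests on exactly the two elementary facts you isolate via Lemma~\ref{Rem-L3}: a minimal vertex cover has $L_{3}(C)=\emptyset$ and is therefore (vacuously) strong, and conversely a cover with $L_{3}(C)=\emptyset$ is minimal. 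The point you flag as delicate is handled correctly: since each $I_{C}$ is generated by pure powers of precisely the variables in $C$, distinct strong covers give distinct radicals, so the irredundant irreducible decomposition is already an irredundant primary decomposition and the associated primes are read off without any grouping of components. The only steps left implicit --- that (minimal) vertex covers of $D$ and of $G$ coincide, and that $I(G)$ is unmixed exactly when all minimal covers have the same cardinality (Remark~\ref{1star}) --- are both recorded in the paper, so your reconstruction is complete modulo the legitimately cited decomposition theorem.
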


\begin{proposition}\label{teorema-estrella}{\rm \cite[Proposition 51]{WOG}}
Let $D$ be a weighted oriented graph and let $G$ be its underlying graph. 
If $I(D)$ is Cohen--Macaulay, then all strong vertex covers of $D$ are
minimal vertex covers and $I(G)$ is Cohen--Macaulay.
\end{proposition}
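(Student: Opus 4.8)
The plan is to treat the two conclusions separately, deriving the statement about strong vertex covers from the unmixedness criterion of Theorem~\ref{theorem42} together with Lemma~\ref{Rem-L3}, and deriving the Cohen--Macaulayness of $I(G)$ from the fact that $I(G)$ is the radical of $I(D)$.

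First I would handle the assertion that every strong vertex cover of $D$ is a minimal vertex cover. Since $R/I(D)$ is Cohen--Macaulay and $I(D)$ is graded, $I(D)$ is unmixed \cite{Mats}. Hence condition (3) of Theorem~\ref{theorem42} holds, so $L_3(C)=\emptyset$ for every strong vertex cover $C$ of $D$. By Lemma~\ref{Rem-L3}, $L_3(C)=\emptyset$ exactly means that no vertex $x\in C$ satisfies $N_D(x)\subset C$; equivalently, every $x\in C$ has a neighbor in $C^c$. This in turn is precisely the condition that deleting any single vertex from $C$ leaves some edge of $G$ uncovered, i.e., that $C$ is a minimal vertex cover. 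Therefore each strong vertex cover of $D$ is minimal.

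For the Cohen--Macaulayness of $I(G)$, I would first observe that $I(G)=\sqrt{I(D)}$: the minimal generator $x_ix_j^{w(x_j)}$ of $I(D)$ has radical $x_ix_j$, and these squarefree monomials are exactly the generators of $I(G)$. Since $R/I(D)$ is Cohen--Macaulay, I would then invoke the theorem that the radical of a Cohen--Macaulay monomial ideal is again Cohen--Macaulay \cite[Theorem~2.6]{Radical-Herzog}, applied to $I=I(D)$, to conclude that $R/I(G)=R/\sqrt{I(D)}$ is Cohen--Macaulay.

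The first part is essentially a direct translation through Lemma~\ref{Rem-L3} and Theorem~\ref{theorem42}, so the only substantive external input is the radical theorem used in the second part. The crux there is that passing to the radical of a monomial ideal preserves the Cohen--Macaulay property, which is the content of \cite[Theorem~2.6]{Radical-Herzog}; this is the one step that cannot be shortcut using only the combinatorial results developed in this section.
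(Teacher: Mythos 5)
Your proposal is correct. Note that the paper does not actually prove this proposition---it is quoted from \cite[Proposition~51]{WOG}---but your reconstruction is exactly the argument the paper itself points to: the introduction attributes the Cohen--Macaulayness of $I(G)$ to \cite[Theorem~2.6]{Radical-Herzog} applied to $\sqrt{I(D)}=I(G)$, and your first part (Cohen--Macaulay $\Rightarrow$ unmixed, then Theorem~\ref{theorem42}(3) plus Lemma~\ref{Rem-L3} to identify ``$L_3(C)=\emptyset$'' with minimality of $C$) is the natural derivation from the tools stated in Section~\ref{prelim-section}.
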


\begin{proposition}\label{propo-estrella}{\rm \cite[Corollary 6]{vivares}}
Let $D=(V,E,w)$ be a weighted oriented graph and let 
$D^{\prime}=(V,E,w^{\prime})$ be the weighted oriented graph with
$w^{\prime}(x)=2$ if $w(x)\geq 2$
and $w^{\prime}(x)=1$ if $w(x)=1$. 
Then, $I(D)$ is Cohen--Macaulay if and only if $I(D^{\prime})$ is Cohen--Macaulay.  
\end{proposition}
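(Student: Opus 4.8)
The plan is to reduce the statement to a single change of one weight and then pass to the Stanley--Reisner complexes of the polarized ideals. First I would record the standard fact that polarization preserves the Cohen--Macaulay property: if $J^{\mathrm{pol}}$ denotes a polarization of a monomial ideal $J$, then $R/J$ is Cohen--Macaulay if and only if $S/J^{\mathrm{pol}}$ is, because the variable differences introduced by polarization form a regular sequence whose quotient recovers $R/J$. Since the passage from $w$ to $w'$ only lowers weights that are $\ge 2$ to other values that are still $\ge 2$ (weight-$1$ vertices, i.e.\ sources and sinks, are untouched), it suffices by induction to treat a single vertex $x_j$ whose weight is changed from $d+1$ to $d$ with $d\ge 2$, all other data being fixed. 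Note that a vertex of weight $\ge 2$ is necessarily the head of at least one arrow, since sources have weight $1$ by convention, so its weight genuinely appears in $I(D)$.

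Write $D_{d}$ and $D_{d+1}$ for the two weighted oriented graphs, differing only in $w(x_j)\in\{d,d+1\}$. After polarizing $x_j$, the generators $x_i x_j^{d}$ (resp.\ $x_i x_j^{d+1}$) coming from the arrows $(x_i,x_j)$ become $x_{i,1}x_{j,1}\cdots x_{j,d}$ (resp.\ $x_{i,1}x_{j,1}\cdots x_{j,d}x_{j,d+1}$), while every other generator of the two ideals is identical; in particular the tail occurrences of $x_j$ use only $x_{j,1}$. Thus $I(D_{d+1})^{\mathrm{pol}}$ is obtained from $I(D_{d})^{\mathrm{pol}}$ by adjoining one new variable $z:=x_{j,d+1}$ and multiplying exactly the generators $x_{i,1}x_{j,1}\cdots x_{j,d}$ by $z$. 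Let $\Delta_d$ and $\Delta_{d+1}$ be the corresponding Stanley--Reisner complexes. A direct inspection of the minimal nonfaces shows that $z$ lies in no minimal nonface other than the enlarged ``head'' sets $\{x_{i,1},x_{j,1},\dots,x_{j,d},z\}$; consequently $\operatorname{lk}_{\Delta_{d+1}}(z)=\Delta_d$, since adding $z$ to a face reinstates precisely the weight-$d$ head constraints and leaves all others unchanged. Because a link of a Cohen--Macaulay complex is again Cohen--Macaulay (Reisner's criterion), this already gives the easy implication: if $I(D_{d+1})$ is Cohen--Macaulay, then so is $I(D_{d})$, and hence lowering weights preserves the property.

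For the converse I would verify Reisner's criterion for $\Delta_{d+1}$ directly, distinguishing faces $\tau$ according to whether they contain $z$. When $z\in\tau$ one has $\operatorname{lk}_{\Delta_{d+1}}(\tau)=\operatorname{lk}_{\Delta_d}(\tau\setminus\{z\})$, a link in the Cohen--Macaulay complex $\Delta_d$, so the required reduced homology vanishing is immediate. The substantive case is $z\notin\tau$: here $\operatorname{lk}_{\Delta_{d+1}}(\tau)$ fails to be a cone with apex $z$ exactly when $\tau$ together with some face completes a full weight-$d$ head chain $\{x_{i,1},x_{j,1},\dots,x_{j,d}\}$, and controlling the reduced homology of these links is where the real work lies. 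I expect to handle it with the Mayer--Vietoris sequence attached to the decomposition $\Delta_{d+1}=\operatorname{star}(z)\cup\operatorname{del}(z)$, using that $\operatorname{star}(z)$ is a cone (hence acyclic), that $\operatorname{star}(z)\cap\operatorname{del}(z)=\operatorname{lk}(z)=\Delta_d$ is Cohen--Macaulay, and that $\operatorname{del}_{\Delta_{d+1}}(z)$ is the complex obtained by discarding the head-into-$j$ constraints, whose homology can be compared with that of $\Delta_d$. The main obstacle is precisely this last comparison: showing that raising the weight does not create a depth drop, i.e.\ that the connecting maps in Mayer--Vietoris---equivalently, the short exact sequence $0\to(\bar S/I(D_d)^{\mathrm{pol}})[z]\xrightarrow{\,\cdot z\,}S/I(D_{d+1})^{\mathrm{pol}}\to \bar S/J_0\to 0$, with $J_0$ the head-free ideal---do not lower the depth of the middle term below its dimension.
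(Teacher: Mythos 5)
Your reduction to a single weight change and your treatment of the ``lowering'' direction are sound: after polarization one does have $\operatorname{lk}_{\Delta_{d+1}}(z)=\Delta_d$ (the only minimal nonfaces through $z$ are the enlarged head chains, and no vertex is lost from the link), so Cohen--Macaulayness of $I(D)$ forces Cohen--Macaulayness of $I(D')$, since links of Cohen--Macaulay complexes are Cohen--Macaulay and polarization preserves the property. But the proof stops there. The converse --- if $I(D')$ is Cohen--Macaulay then so is $I(D)$, i.e., \emph{raising} a weight from $d$ to $d+1$ --- is only a plan, and you say yourself that ``the real work'' and ``the main obstacle'' lie in the unresolved comparison involving $\operatorname{del}_{\Delta_{d+1}}(z)$. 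That is a genuine gap, not a routine verification. Concretely, $\operatorname{del}_{\Delta_{d+1}}(z)$ is the Stanley--Reisner complex of the head-free ideal, which is (a cone, with apexes $x_{j,2},\dots,x_{j,d}$, over) the polarized edge ideal of the digraph obtained from $D$ by deleting all arrows into $x_j$; that digraph need not be Cohen--Macaulay or even well-covered, so no star-union-deletion gluing theorem applies off the shelf. The cone structure does make the Mayer--Vietoris computation for the \emph{whole} complex easy (both pieces are acyclic, giving $\tilde{H}_i(\Delta_{d+1})\cong\tilde{H}_{i-1}(\Delta_d)$), but Reisner's criterion must be checked on \emph{every} link: for faces $\tau$ containing all of $x_{j,2},\dots,x_{j,d}$ no apex remains and the cone trick fails, and the dimension bookkeeping throughout presupposes purity of $\Delta_{d+1}$, which is part of what is being proved. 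Closing this requires genuine structural input about weighted oriented edge ideals, not formal manipulation of the generators.

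Two contextual points. First, the paper does not prove this proposition at all; it quotes it from \cite[Corollary~6]{vivares}, so there is no internal argument to measure your route against --- your proposal must stand as a from-scratch proof, and as such it is incomplete. Second, the direction you did establish is the one the paper never uses: in the proof of Theorem~\ref{theoremCM} the proposition is invoked precisely to pass from the weight-truncated digraph (``we may assume $V^{+}=V^{2}$'') back to Cohen--Macaulayness of the original $D$, i.e., the raising direction that your proposal leaves open.
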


\begin{definition}
The {\it cover number\/} of a graph $G$, denoted by $\tau(G)$, is the cardinality of a minimum vertex cover of $G$.
\end{definition}

\begin{definition}\label{konig-def}
A collection of pairwise disjoint edges of a graph $G$ is called a
{\it matching\/}. A matching $P=\{e_1,\ldots,e_g\}$ is \textit{perfect} if
$V(G)=\bigcup_{i=1}^ge_i$.
$G$ is a {\it K\"{o}nig graph\/} if $\tau(G) =\nu(G)$ where $\nu(G)$
is the \textit{matching number} of $G$, that is, the maximum
cardinality of a matching of $G$.   
\end{definition} 

\begin{definition}
Let $G$ be a graph, a {\it stable set\/} of $G$ 
is a subset of $V(G)$ containing no edge of $G$. The graph $G$ is
{\it well-covered\/} 
if all maximal stable sets of $G$ have the same cardinality.
\end{definition}

\begin{remark}\rm\label{1star} Let $G$ be a graph. 
A set of vertices $S$ is a (maximal) stable set of $G$ if and only if
$V(G)\setminus 
S$ is a (minimal) vertex cover of $G$. The edge ideal $I(G)$ is unmixed if and
only if all minimal vertex covers of $G$ have the same cardinality.  
Then, the edge ideal $I(G)$ is unmixed if and only if $G$ is well-covered.
\end{remark}

\begin{definition}
A graph $G$ is called \textit{very well-covered} if it is
well-covered, has no isolated vertices, and $|V(G)|= 2\tau(G)$.
\end{definition}

\begin{definition}\label{p-property-def}
Let $P$ be a perfect matching of a graph $G$. If $\{a,a^{\prime}\}$ is
an edge of $G$ 
for all $\{a,b\}, \{a^{\prime},b^{\prime}\}\in E(G)$ and
$\{b,b^{\prime}\}\in P$, then we say that $P$ {\it satisfies property {\bf(P)}\/}. 
\end{definition}

\begin{remark}\label{sep26-19}
Let $P$ be a perfect matching of a graph $G$ with property {\bf(P)}.
Note that if $\{b,b'\}$ is in $P$ and $a\in V(G)$, then
$\{a,b\}$ and $\{a,b'\}$ cannot be both in $E(G)$ because $G$ 
has no loops. 
\end{remark}

A bipartite graph $G$ without isolated vertices is
unmixed if and only if $G$ has a perfect matching $P$ that satisfies 
property {\bf(P)} \cite[Theorem~1.1]{unmixed}. The next result
generalizes this fact.

\begin{theorem}\label{konig}{\rm \cite[Theorem 1.2]{favaron}}
The following conditions are equivalent for a graph $G$: 
\begin{enumerate}[{\rm (a)}]
\item $G$ is very well-covered.
\item $G$ has a perfect matching with the property {\bf{(P)}}. 
\item $G$ has at least one perfect matching, and each perfect
matching of $G$ satisfies {\bf{(P)}}.
\end{enumerate}
\end{theorem}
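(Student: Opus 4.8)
The plan is to prove the cycle of implications (a)$\Rightarrow$(c)$\Rightarrow$(b)$\Rightarrow$(a). Since (c) trivially implies (b) (a perfect matching satisfying {\bf(P)} is in particular a perfect matching), the real work lies in (b)$\Rightarrow$(a) and (a)$\Rightarrow$(c). Everything rests on one bookkeeping observation, which I would isolate as a preliminary lemma: if $G$ has a perfect matching $P=\{e_1,\dots,e_m\}$, then the $e_i$ partition $V(G)$, and since an independent set contains at most one endpoint of each $e_i$, every independent set has size at most $m=|V(G)|/2$; moreover a maximal independent set has size exactly $m$ \emph{if and only if} it meets every edge of $P$.

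For (b)$\Rightarrow$(a), let $P$ be a perfect matching with property {\bf(P)}, and I would show that every maximal independent set $S$ meets each edge of $P$ exactly once. Suppose not: some $\{b,b'\}\in P$ has $b,b'\notin S$. By maximality $b$ has a neighbor $a\in S$ and $b'$ has a neighbor $a'\in S$, so $\{a,b\},\{a',b'\}\in E(G)$ with $\{b,b'\}\in P$; property {\bf(P)} forces $\{a,a'\}\in E(G)$. Here $a\neq a'$ by Remark~\ref{sep26-19}, so $a,a'$ are distinct adjacent vertices of the independent set $S$, a contradiction. By the lemma all maximal independent sets then have the common size $m=|V(G)|/2$, so $G$ is well-covered with $\tau(G)=m$, has no isolated vertex (it has a perfect matching), and $|V(G)|=2\tau(G)$; that is, $G$ is very well-covered.

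For (a)$\Rightarrow$(c) there are two tasks: produce a perfect matching and show that every perfect matching satisfies {\bf(P)}. The second task is short once a perfect matching $P$ exists: since $G$ is well-covered with $\alpha(G)=|V(G)|/2=|P|$, the lemma gives that every maximal independent set meets each edge of $P$ exactly once. To verify {\bf(P)}, take $\{a,b\},\{a',b'\}\in E(G)$ with $\{b,b'\}\in P$ and suppose $\{a,a'\}\notin E(G)$. If $a=a'$ then $a$ is adjacent to both $b$ and $b'$, and any maximal independent set through $a$ already contradicts the lemma; so $a\neq a'$ and $\{a,a'\}$ is independent. Extend $\{a,a'\}$ to a maximal independent set $S$. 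Since $a\in S$ is adjacent to $b$ and $a'\in S$ is adjacent to $b'$, neither $b$ nor $b'$ lies in $S$, contradicting that $S$ meets $\{b,b'\}$. Hence $\{a,a'\}\in E(G)$ and $P$ has property {\bf(P)}.

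The main obstacle is the existence of a perfect matching in a very well-covered graph, which is the genuine content of the theorem. The difficulty is that a minimum vertex cover $C=V(G)\setminus S$ (for a maximal independent set $S$, with $|C|=|S|=|V(G)|/2$) need not be independent—very well-covered graphs can be highly non-bipartite, e.g.\ a triangle with a pendant attached at each vertex—so the naive attempt to extract a matching between $S$ and $C$ via Hall's theorem founders on the edges inside $C$. Moreover the matching number alone is too weak: writing $\mathrm{def}(G)$ for the number of vertices missed by a maximum matching, one always has $\alpha(G)\le |V(G)|-\nu(G)=\tfrac{1}{2}(|V(G)|+\mathrm{def}(G))$, so knowing only $\alpha(G)=|V(G)|/2$ forces nothing better than $\mathrm{def}(G)\ge 0$. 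The crux is therefore to feed the \emph{full} well-covered hypothesis—that \textbf{all} maximal independent sets have the same size, not merely the maximum one—into the Gallai--Edmonds decomposition and conclude $\mathrm{def}(G)=0$; the natural route is to show that a positive deficiency would produce a maximal independent set of size strictly smaller than $|V(G)|/2$, contradicting well-coveredness. Reconciling the factor-critical components of the decomposition with the uniform size of all maximal independent sets is where I expect essentially all of the real work to lie, and it is precisely the step at which the hypothesis $|V(G)|=2\tau(G)$ (equivalently $\alpha(G)=\tau(G)$) becomes indispensable.
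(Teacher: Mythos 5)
First, a point of comparison: the paper itself offers no proof of this statement at all---it is quoted verbatim from Favaron \cite[Theorem~1.2]{favaron}---so your attempt has to stand entirely on its own. The parts you actually prove are correct: the bookkeeping lemma, (c)$\Rightarrow$(b), (b)$\Rightarrow$(a) (maximality of $S$ produces neighbours $a,a'\in S$ of the two exposed endpoints of an edge of $P$, $a\neq a'$ by the no-loop observation of Remark~\ref{sep26-19}, and property \textbf{(P)} then contradicts stability of $S$), and the half of (a)$\Rightarrow$(c) asserting that \emph{every} perfect matching of a very well-covered graph satisfies \textbf{(P)} are all sound.

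The genuine gap is the one you yourself flag: you never prove that a very well-covered graph has a perfect matching; you only name the Gallai--Edmonds decomposition as ``the natural route'' and say where you expect the work to lie. That existence statement is the substantive content of Favaron's theorem, and without it the cycle (a)$\Rightarrow$(c)$\Rightarrow$(b)$\Rightarrow$(a) is broken at its first link. Moreover, the sketch as stated does not obviously close. Writing $D,A,C$ for the Gallai--Edmonds sets and $s$, $b$ for the numbers of singleton and larger factor-critical components of $G[D]$, the natural count---at most one vertex from each singleton component, at most $(n_i-1)/2$ from each larger component $D_i$, at most $|A|$ from $A$, at most $|C|/2$ from $C$ (since $G[C]$ has a perfect matching)---bounds every stable set by $n/2+(s+|A|-b)/2$, whereas positive deficiency gives only $s+b>|A|$; when $s+|A|\geq b$ this bound is useless, so concluding $\mathrm{def}(G)=0$ requires a genuinely finer use of the hypothesis that \emph{all} maximal stable sets have size exactly $n/2$, and no such argument appears in your outline. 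Incidentally, the Hall's-theorem route you dismissed can in fact be completed: take $A_0\subseteq S$ a minimal violator of Hall's condition for the bipartite graph between a maximal stable set $S$ and $C=V(G)\setminus S$, so that $|N(A_0)|=|A_0|-1$ and, for each $a\in A_0$, there is a perfect matching of $A_0\setminus\{a\}$ onto $N(A_0)$; using that $G$, $G-A_0-N(A_0)$, and $G[A_0\cup N(A_0)]$ are then all forced (by well-coveredness of $G$) to have all maximal stable sets of one fixed size, these matchings show that every maximal stable set of $G[A_0\cup N(A_0)]$ must equal $A_0$, which is absurd because some maximal stable set there contains a vertex of $N(A_0)\neq\emptyset$. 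So the theorem is provable at the level of technology you invoke, but your proposal, as written, proves only the easy implications and leaves its core unestablished.
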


\begin{remark}\rm\label{2star}
If $G$ is a very well-covered graph, then $|V(G)|= 2\tau(G)$. Furthermore, by Theorem \ref{konig}, $G$ has a perfect matching, then $2\nu(G)=|V(G)|$. Therefore $G$ is a K\"{o}nig graph.
\end{remark}

\begin{theorem}\label{lemma-Ivan}{\rm
(\cite[Theorem 5]{disc-math}, \cite[Lemma~2.3]{susan-reyes-vila})} 
Let $G$ be a graph without isolated vertices. If $G$ is a graph
without $3$-, $5$-, and $7$-cycles or $G$ is a K\"{o}nig graph, then $G$ is
well-covered if and only if $G$ is very well-covered. 
\end{theorem}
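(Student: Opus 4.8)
The reverse implication is immediate, since by definition a very well-covered graph is well-covered. For the forward implication put $n=|V(G)|$ and let $\alpha(G)$ denote the cardinality of a maximum stable set of $G$. Because $S$ is stable exactly when $V(G)\setminus S$ is a vertex cover (Remark~\ref{1star}), complementation yields the Gallai identity $\alpha(G)+\tau(G)=n$, while the well-covered hypothesis says that every maximal stable set already has cardinality $\alpha(G)$. Comparing with the definition of a very well-covered graph, the forward implication reduces to the single equality $\tau(G)=n/2$, equivalently $\alpha(G)=\tau(G)=n/2$. I would obtain this from the two inequalities $\alpha(G)\le n/2$ and $\alpha(G)\ge n/2$.

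The bound $\alpha(G)\le n/2$ (equivalently $\tau(G)\ge n/2$) holds for \emph{every} well-covered graph without isolated vertices, needing none of the cycle or K\"onig hypotheses. I would prove it by induction on $n$ through the closed-neighborhood deletion $G\mapsto G-N_G[x]$, where $N_G[x]=\{x\}\cup N_G(x)$: for any maximal stable set $J$ of $G-N_G[x]$ the set $J\cup\{x\}$ is a maximal stable set of $G$, so $G-N_G[x]$ is again well-covered and all its maximal stable sets have cardinality $\alpha(G)-1$. Feeding this into the induction hypothesis and using $\deg_G(x)\ge 1$ gives $\alpha(G)\le 1+(n-1-\deg_G(x))/2\le n/2$; the only points needing attention are the degenerate configurations (a dominating vertex, or twin vertices, which are exactly what can make $G-N_G[x]$ acquire isolated vertices), and these are disposed of directly. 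This is the routine half.

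The reverse bound $\alpha(G)\ge n/2$ genuinely needs the hypothesis, as it fails for $C_7$. When $G$ is K\"onig it is automatic: a matching is a set of pairwise disjoint edges, so $\nu(G)\le n/2$, and then $\tau(G)=\nu(G)$ gives $\alpha(G)=n-\tau(G)=n-\nu(G)\ge n/2$. Together with the previous paragraph this settles the K\"onig case at once, forcing $\alpha(G)=\tau(G)=\nu(G)=n/2$; in particular $G$ has a perfect matching, which by Theorem~\ref{konig} automatically has property {\bf(P)}. A second generally valid inequality that orients the remaining case is $\tau(G)\le 2\nu(G)$: the vertices left unsaturated by a maximum matching form a stable set, so $\alpha(G)\ge n-2\nu(G)$. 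Hence in all cases $\nu(G)\le\tau(G)\le 2\nu(G)$, and the task in the last case is to sharpen the upper bound to $\tau(G)\le n/2$.

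My strategy in the remaining case---$G$ without $3$-, $5$-, and $7$-cycles---is to reduce it to the K\"onig case by proving that such a $G$ is itself K\"onig; by the previous paragraph this would immediately give $\alpha(G)=\tau(G)=n/2$ and hence that $G$ is very well-covered. Since K\"onig graphs are exactly those with $\alpha(G)+\nu(G)=n$, and every bipartite graph is K\"onig, a non-K\"onig $G$ must contain odd cycles, which the hypothesis constrains to have length at least $9$. The step I expect to be the main obstacle is precisely this implication---that a well-covered graph with no $3$-, $5$-, or $7$-cycle is K\"onig---and it is subtle, because the mere presence of a long odd cycle is \emph{not} itself an obstruction: for instance a cycle $C_9$ with a pendant edge attached at each vertex is well-covered, K\"onig, and very well-covered. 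The discrepancy to be controlled is the stronger failure $\tau(G)>\nu(G)$, and ruling it out requires a detailed analysis---carried out in the cited references---of how the matching and vertex-cover structure of a well-covered graph forces the equicardinality of maximal stable sets (Remark~\ref{1star}) to break once a shortest odd cycle becomes too long. Supplying that analysis is the crux; everything else is assembled from the two routine inequalities above.
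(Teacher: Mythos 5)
Your framework (Gallai identity, reduction to $\alpha(G)=|V(G)|/2$) is fine, and the inequality $\alpha(G)\le |V(G)|/2$ for well-covered graphs without isolated vertices is indeed a true classical fact (due to Berge; it is the starting point of Favaron's paper \cite{favaron}, where very well-covered graphs are precisely its equality case), so the K\"onig case is settled once that inequality is granted. Even there, however, your inductive sketch is looser than you claim: a vertex $v$ becomes isolated in $G-N_G[x]$ exactly when $v\notin N_G[x]$ and $N_G(v)\subseteq N_G(x)$, and such $v$ need not be a twin of $x$, nor need $x$ be dominating. For instance, in the graph obtained from a $4$-cycle by attaching a pendant vertex at each vertex (a well-covered graph), taking $x$ on the cycle makes the pendants attached to the two cycle-neighbors of $x$ isolated in $G-N_G[x]$, and they are not twins of $x$. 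To close the induction one must, say, choose $x$ with inclusion-minimal open neighborhood---so that the isolated vertices are exactly the false twins of $x$---and then prove that a false-twin class $T$ satisfies $|T|\le|N_G(T)|$ by a swap argument that genuinely uses well-coveredness; ``disposed of directly'' elides the one place in this half where the hypothesis actually enters.

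The genuine gap is the case of graphs without $3$-, $5$-, and $7$-cycles. You reduce it to the assertion that a well-covered graph with no $3$-, $5$-, or $7$-cycles is K\"onig, and then you explicitly defer that assertion to ``the cited references.'' But that assertion is not an auxiliary lemma feeding into the theorem; it essentially \emph{is} the theorem, and is a posteriori equivalent to it: once $G$ is very well-covered it has a perfect matching (Theorem~\ref{konig}) and is therefore K\"onig (Remark~\ref{2star}), while conversely your own argument derives very-well-coveredness from K\"onigness. So no independent route is offered, and the combinatorial analysis of well-covered graphs of odd girth at least $9$---which is exactly the content of \cite[Theorem 5]{disc-math}---is absent. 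To be fair, the paper itself supplies no proof either: Theorem~\ref{lemma-Ivan} is stated as a quotation of \cite[Theorem 5]{disc-math} and \cite[Lemma~2.3]{susan-reyes-vila}. But a blind proof attempt must reconstruct that analysis rather than point back to it; as written, your proposal proves only the K\"onig half of the statement.
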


\begin{theorem}\label{3-5cycles} 
If $G$ is a graph without $3$- and $5$-cycles or $G$ is a K\"onig graph,
then the following conditions are equivalent: 
\begin{enumerate}[{\rm (a)}]
\item $I(G)$ is Cohen--Macaulay.
\item If $H$ is a connected component of $G$, then $H$ is an isolated
vertex or $H$ has a perfect matching $P$ with the property {\bf{(P)}}
and there are no $4$-cycles of $H$ with two edge in $P$. 
\end{enumerate}
\end{theorem}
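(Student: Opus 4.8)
The plan is to reduce to connected graphs and to the class of very well-covered graphs, where Favaron's theorem and the known description of Cohen--Macaulayness take over. First I would observe that both (a) and (b) respect the decomposition of $G$ into connected components $H_1,\dots,H_r$: the independence complex $\mathrm{Ind}(G)$ is the simplicial join of the $\mathrm{Ind}(H_i)$ (equivalently $R/I(G)$ is the tensor product over $K$ of the quotients $R_i/I(H_i)$), and such a join is Cohen--Macaulay over $K$ if and only if every factor is, while (b) is phrased componentwise. So I would fix one component $H$, discard the isolated-vertex case (where both assertions are trivial), and assume $H$ is connected with at least one edge.

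For the constructive implication (b) $\Rightarrow$ (a), a perfect matching $P$ with property \textbf{(P)} makes $H$ very well-covered by Theorem~\ref{konig}, hence König by Remark~\ref{2star}, so no cycle hypothesis is needed here. What remains is that a very well-covered graph carrying a perfect matching with \textbf{(P)} and \emph{no} $4$-cycle with two edges in $P$ has a Cohen--Macaulay edge ideal. I would prove this by induction on $|V(H)|$, showing that $\mathrm{Ind}(H)$ is vertex decomposable (hence shellable, hence Cohen--Macaulay): property \textbf{(P)} supplies a matching edge $\{x,y\}\in P$ that can serve as a shedding vertex, and deleting $y$, respectively $x$ together with its neighbors, returns very well-covered graphs of the same type, the ban on bad $4$-cycles being exactly what keeps property \textbf{(P)} intact after peeling. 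Alternatively one may invoke the existing classification of Cohen--Macaulay very well-covered graphs and verify that its matching condition is equivalent to ``no $4$-cycle with two edges in $P$''. The guiding example is $C_4$, which is very well-covered but whose independence complex is two disjoint edges, hence disconnected and not Cohen--Macaulay, and which is precisely a $4$-cycle with two edges in its perfect matching.

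For (a) $\Rightarrow$ (b) I would use that Cohen--Macaulay ideals are unmixed, so $H$ is well-covered by Remark~\ref{1star}. To promote this to very well-covered I would apply Theorem~\ref{lemma-Ivan}: in the König case this is immediate, and in the case with no $3$- or $5$-cycles I must in addition exclude $7$-cycles, the single obstruction left open by the hypothesis. Here the full force of Cohen--Macaulayness (beyond mere unmixedness) enters: an induced $C_7$ produces a link in $\mathrm{Ind}(H)$ with the reduced homology of $S^1$ below its dimension (note $\mathrm{Ind}(C_7)\simeq S^1$ lives in dimension $2$), contradicting Reisner's criterion, so no such configuration survives and Theorem~\ref{lemma-Ivan} applies. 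With $H$ now very well-covered, Favaron yields a perfect matching $P$ with \textbf{(P)}. Finally, a $4$-cycle with two edges in $P$ would give a face whose link is the (disconnected) independence complex of $C_4$, once more violating Reisner, so none exists and (b) holds.

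The main obstacle is concentrated in the two middle steps: the equivalence, for very well-covered graphs, between Cohen--Macaulayness and the absence of a $4$-cycle with two edges in $P$ (used in both directions), and the exclusion of $7$-cycles in the no-$3$-,-$5$-cycle case, which is what allows Theorem~\ref{lemma-Ivan} to bridge well-covered and very well-covered even though that theorem nominally also forbids $7$-cycles. The cycle-by-cycle bookkeeping is routine once the correct shedding vertex (for the inductive direction) and the correct face whose link exhibits $C_4$ or $C_7$ (for the obstructions) have been located; pinning these down, and checking that property \textbf{(P)} is inherited under deletion, is the technical heart of the argument.
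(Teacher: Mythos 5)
Your component reduction and your outline of (b)$\Rightarrow$(a) are essentially fine: property \textbf{(P)} forces each non-trivial component to be very well-covered, hence K\"onig, so one may quote a known characterization of Cohen--Macaulayness for such graphs --- this is exactly what the paper does, citing \cite[Theorem~32(d)]{Ivan-Reyes} and \cite[Proposition~28(iv)]{Ivan-Reyes} (your inductive vertex-decomposability sketch is plausible but would in effect be reproving those results). The genuine gap is in (a)$\Rightarrow$(b), in the case where $G$ has no $3$- or $5$-cycles. Your plan is to exclude induced $7$-cycles via Reisner's criterion so that Theorem~\ref{lemma-Ivan} becomes applicable, but the claim you need is false: take $C_7$ with a pendant edge (whisker) attached to each vertex. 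That graph is Cohen--Macaulay (whiskered graphs are Cohen--Macaulay, indeed vertex decomposable --- see the paper's reference \cite{cm-graphs}), has no $3$- or $5$-cycles, and contains an induced $7$-cycle. The underlying error is the identification of induced subgraphs with links: Reisner's criterion constrains the complexes $\mathrm{Ind}(H\setminus N[F])$ for independent sets $F$, and an induced $C_7$ need not arise this way. In the whiskered example it never does: any independent $F$ whose closed neighborhood absorbs all whisker vertices must contain every whisker vertex, and then $N[F]=V(H)$. The same objection applies to your $4$-cycle step: asserting that a $4$-cycle with two edges in $P$ ``gives a face whose link is $\mathrm{Ind}(C_4)$'' requires producing an independent set $F$ with $H\setminus N[F]$ equal to exactly that $4$-cycle, which you have not done; there the desired conclusion is at least true, but the construction needs the very well-covered structure and is not automatic.

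The paper avoids this entirely: for (a)$\Rightarrow$(b) it quotes \cite[Theorem~32(d)]{Ivan-Reyes} (graphs without $3$- and $5$-cycles) and \cite[Proposition~28(iv)]{Ivan-Reyes} (K\"onig graphs), which directly yield unmixedness, a perfect matching $P$ with $|P|=\tau(G)$, and the absence of $4$-cycles with two edges in $P$; very well-coveredness then follows from $|V(H)|=2|P|=2\tau(H)$, and property \textbf{(P)} from Favaron's Theorem~\ref{konig}(c). So no exclusion of $7$-cycles is needed --- nor is it possible. To repair your argument you would have to replace the Theorem~\ref{lemma-Ivan} bridge by a correct route from ``Cohen--Macaulay, well-covered, no $3$- or $5$-cycles'' to ``very well-covered,'' which in practice means reproving the cited characterization rather than deducing it from Reisner applied to induced cycles.
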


\begin{proof} (a)$\Rightarrow$(b): Let $H$ be a connected component of
$G$ which is not an isolated vertex. First assume that $G$ has no 
$3$- or $5$-cycles .
According to \cite[Lemma~4.1]{cm-graphs}, $H$ is Cohen--Macaulay. Then,
by \cite[Theorem~32(d)]{Ivan-Reyes}, $H$ is unmixed, has a perfect
matching $P=\{e_1,\ldots,e_g\}$ with $g=|P|=\tau(G)$, and has no 
$4$-cycles containing two $e_i$'s. Then, $H$ is very well-covered
since $|V(G)|=2\tau(G)$. Therefore, by Theorem~\ref{konig}(c), $P$
has property {\bf{(P)}}. Now assume  $G$ is K\"onig. As $H$ is Cohen--Macaulay
\cite[Lemma~4.1]{cm-graphs}, by 
\cite[Proposition~28(iv)]{Ivan-Reyes}, $H$ is well-covered, has a perfect
matching $P$ with $|P|=\tau(G)$, and has no $4$-cycles with two edges
in $P$. Then, $H$ is very well-covered 
since $|V(G)|=2\tau(G)$. Therefore, by Theorem~\ref{konig}(c), $P$
has property {\bf{(P)}}.

(b)$\Rightarrow$(a): 
Let $H$ be a connected component of $G$ which is 
not an isolated vertex. First assume that $G$ has no 
$3$- or $5$-cycles.  The graph $G$ is Cohen--Macaulay if and only
if all connected components of $G$ are Cohen--Macaulay
\cite[Lemma~4.1]{cm-graphs}. Thus we need only show that $H$ is
Cohen--Macaulay. As $H$ has a perfect matching $P$ with property
{\bf{(P)}}, by Theorem~\ref{konig} and Remark~\ref{2star}, $H$ is
very well-covered and K\"onig. Hence,
$H$ satisfies the hypothesis of \cite[Theorem~32(d)]{Ivan-Reyes},
and consequently $H$ is Cohen--Macaulay. Now assume that $G$ is
K\"onig. As before, we need only show that $H$ is
Cohen--Macaulay. As $H$ has a perfect matching $P$ with property
{\bf{(P)}}, by Theorem~\ref{konig}, $H$ is
very well-covered. Hence,
$H$ satisfies the hypothesis of \cite[Theorem~28(iv)]{Ivan-Reyes},
and consequently $H$ is Cohen--Macaulay.
\end{proof}

The next lemma was shown in \cite[Theorem~2.4]{EV} for Cohen--Macaulay
bipartite graphs and was later generalized to Cohen--Macaulay K\"onig graphs. 

\begin{lemma}\label{corolario-grado1}{\rm \cite[Corollary 29]{Ivan-Reyes}}
If $G$ is a Cohen--Macaulay K\"onig graph 
without isolated vertices, then $G$ has a vertex of degree $1$.
\end{lemma}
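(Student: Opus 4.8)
The plan is to extract the structure theorem for these graphs and then read off a leaf from a suitable acyclic orientation. Since $G$ is K\"onig and $I(G)$ is Cohen--Macaulay, Theorem~\ref{3-5cycles} shows that each connected component of $G$ has a perfect matching satisfying property {\bf(P)} with no $4$-cycle using two of its edges; taking the union of these (equivalently, passing to a single component, in which a degree-$1$ vertex is still a degree-$1$ vertex of $G$) we obtain a perfect matching $P$ of $G$ with property {\bf(P)} and no $4$-cycle having two edges in $P$. Write $\overline{v}$ for the $P$-partner of a vertex $v$ and call an edge a \emph{cross edge} if it is not in $P$. Since a vertex has degree $1$ exactly when its only neighbor is $\overline{v}$, the goal becomes to find a vertex meeting no cross edge. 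I would stress from the start that the no-$4$-cycle hypothesis is essential: $C_4$ satisfies {\bf(P)} (it is very well-covered) yet has no vertex of degree $1$, so any correct argument must use that there is no $4$-cycle with two edges in $P$.

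Next I would orient the cross edges into a digraph $\vec{G}$ on $V(G)$ by declaring an arc $u\to v$ whenever $\{u,\overline{v}\}$ is a cross edge (i.e.\ $u$ is adjacent to the $P$-partner of $v$). This convention is the crucial device. Under it $v$ meets a cross edge if and only if $v$ has an out-arc, so it suffices to produce a vertex with no out-arc, that is, a sink of $\vec{G}$; and $\vec{G}$ has no loops because $\{v,\overline{v}\}\in P$ is never a cross edge.

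The core step is to prove that $\vec{G}$ is acyclic. First, property {\bf(P)} gives transitivity: if $u\to v$ and $v\to w$ with $u\neq w$, I apply {\bf(P)} to the cross edges $\{u,\overline{v}\}$ and $\{\overline{w},v\}$ together with $\{\overline{v},v\}\in P$ to conclude $\{u,\overline{w}\}\in E$, which is a cross edge because $u\neq w$, whence $u\to w$; the degenerate case $w=\overline{u}$ is ruled out by Remark~\ref{sep26-19}, since it would force $u$ to be adjacent to both ends of the matching edge $\{v,\overline{v}\}$. Then any directed cycle telescopes under transitivity to a $2$-cycle $a\to b\to a$, which supplies two distinct cross edges $\{a,\overline{b}\}$ and $\{b,\overline{a}\}$ joining the matching edges $\{a,\overline{a}\}$ and $\{b,\overline{b}\}$; together these form the $4$-cycle $a,\overline{b},b,\overline{a}$ with two edges in $P$, contradicting our hypothesis. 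Hence $\vec{G}$ is a finite acyclic digraph and has a sink, which is the required vertex of degree $1$.

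The main obstacle is precisely this acyclicity argument: one must verify the transitivity law cleanly, dispose of the degenerate sub-cases via Remark~\ref{sep26-19}, and see exactly how the absence of $4$-cycles with two edges in $P$ forbids $2$-cycles (and hence, after telescoping, all cycles). Everything else---the reduction via Theorem~\ref{3-5cycles} and the translation between sinks and degree-$1$ vertices---is bookkeeping once the orientation $u\to v\Leftrightarrow \{u,\overline{v}\}\in E\setminus P$ is in place.
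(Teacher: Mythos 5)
Your proof is correct, but it is necessarily a different route from the paper's: the paper offers no argument at all for this lemma, simply citing \cite[Corollary~29]{Ivan-Reyes}, whereas you derive it inside the paper's own framework. Your reduction is sound: by Theorem~\ref{3-5cycles} (with $G$ K\"onig and $I(G)$ Cohen--Macaulay) each non-trivial component has a perfect matching $P$ with property {\bf(P)} and no $4$-cycle using two edges of $P$, and a degree-$1$ vertex of a component is one of $G$. The orientation $u\to v \Leftrightarrow \{u,\overline{v}\}\in E(G)\setminus P$ is the right device: loops are excluded since $\{v,\overline{v}\}\in P$; transitivity follows from {\bf(P)} applied to $\{u,\overline{v}\}$, $\{\overline{w},v\}$ and $\{v,\overline{v}\}\in P$, with the degenerate case $u=\overline{w}$ correctly killed by Remark~\ref{sep26-19}; a $2$-cycle $a\to b\to a$ yields the $4$-cycle $(a,\overline{b},b,\overline{a})$ with the two $P$-edges $\{a,\overline{a}\},\{b,\overline{b}\}$ (its four vertices are distinct since cross edges are not loops), which is forbidden; and telescoping a shortest directed cycle via transitivity reduces it to a $2$-cycle, so $\vec{G}$ is acyclic and has a sink, whose only neighbor is then its $P$-partner. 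In effect you have reconstructed, in digraph language, the kind of ``triangular'' ordering of the matching edges that underlies the structure theory of very well-covered graphs in \cite{favaron,Ivan-Reyes}; what your argument buys is a self-contained proof of the lemma from results already quoted in the paper (Theorem~\ref{3-5cycles}, Theorem~\ref{konig}, Remark~\ref{sep26-19}), at the cost of still resting on the Castrill\'on--Cruz--Reyes characterization through Theorem~\ref{3-5cycles} itself, so the logical dependence on \cite{Ivan-Reyes} is reduced but not eliminated. Your remark that the no-$4$-cycle hypothesis is indispensable (witness $C_4$, which satisfies {\bf(P)} but has no leaf) is also accurate and well placed.
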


\section{Unmixed weighted oriented graphs}\label{unmixed-section}
In this section we classify the unmixed property of a weighted
oriented graph $D$ whose underlying graph $G$ is K\"onig. 
Furthermore, we characterize when $I(D)$ is unmixed if $G$ is very
well-covered or $G$ is a graph without $3$-, $5$- and $7$-cycles.

\begin{lemma}\label{lemma2}
Let $G$ be a very well-covered graph with a perfect matching $P$. If
$\{a,b_1\}$, $\{a,b_2\}\in E(G)$ and $\{b_1,b_{1}^{\prime}\},
\{b_2,b_{2}^{\prime}\}\in P$, then $\{b_{1}^{\prime},b_{2}^{\prime}\}\notin E(G)$.  
\end{lemma}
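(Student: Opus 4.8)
The plan is to argue by contradiction, the crucial observation being that a perfect matching of a very well-covered graph automatically satisfies property {\bf(P)}. Since $G$ is very well-covered and $P$ is a perfect matching of $G$, Theorem~\ref{konig}(c) guarantees that $P$ satisfies property {\bf(P)}. Suppose, to the contrary, that $\{b_1',b_2'\}\in E(G)$.

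The key step is a single application of property {\bf(P)} that ``transfers'' the assumed edge $\{b_1',b_2'\}$ across the matching edge $\{b_2,b_2'\}\in P$. Concretely, I would take the two edges $\{b_1',b_2'\}$ and $\{a,b_2\}$ of $G$, whose endpoints $b_2'$ and $b_2$ are matched by $\{b_2,b_2'\}\in P$. Property {\bf(P)} then forces the remaining two endpoints to be adjacent, i.e.\ $\{a,b_1'\}\in E(G)$.

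To finish, I would combine this newly produced edge with the data already in hand. We now have $\{a,b_1\}\in E(G)$ (a hypothesis) together with $\{a,b_1'\}\in E(G)$ (just derived), while $\{b_1,b_1'\}\in P$. This is exactly the configuration ruled out by Remark~\ref{sep26-19}: a vertex $a$ cannot be adjacent to both endpoints of an edge of $P$ when $P$ has property {\bf(P)}, since property {\bf(P)} would otherwise produce the loop $\{a,a\}\in E(G)$. This contradiction shows $\{b_1',b_2'\}\notin E(G)$.

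The only subtlety I anticipate is the bookkeeping of possible coincidences among $a,b_1,b_1',b_2,b_2'$ (for instance $a=b_1'$, or the two edges of $P$ coinciding). In each such degenerate case the desired conclusion follows even more directly by applying Remark~\ref{sep26-19} to $\{a,b_1\}$, $\{a,b_2\}$ and the relevant edge of $P$, so these cases present no real difficulty. Thus the genuine content of the argument is the correct pairing of the two graph edges with the matching edge $\{b_2,b_2'\}$ in the invocation of property {\bf(P)}; once that pairing is identified, the result is immediate. Note that very well-coveredness is used only to supply property {\bf(P)} for $P$.
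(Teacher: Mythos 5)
Your proof is correct and is essentially the paper's own argument: contradiction, property {\bf(P)} via Theorem~\ref{konig}(c), one transfer across a matching edge, then Remark~\ref{sep26-19}. The only difference is cosmetic—you transfer $\{b_1',b_2'\}$ across $\{b_2,b_2'\}$ to get $\{a,b_1'\}$, while the paper transfers across $\{b_1,b_1'\}$ to get $\{a,b_2'\}$, which is the same argument with the indices $1$ and $2$ interchanged.
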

\begin{proof}
By contradiction, suppose $\{b_{1}^{\prime},b_{2}^{\prime}\}\in
E(G)$. Thus, by (c) in Theorem  \ref{konig}, $\{a,b_{2}^{\prime}\}$ is
in $E(G)$, since $\{a,b_{1}\}$ is in $E(G)$ and
$\{b_1,b_{1}^{\prime}\}$ is in $P$.
A contradiction by Remark~\ref{sep26-19}, since $\{a,b_2\}$ is in
$E(G)$ by hypothesis.     
\end{proof}

\begin{lemma}\label{lemma3}
Let $G$ be a very well-covered graph with a perfect matching $P$. If $\{a,b\}\in E(G)$ and $\{b,b^{\prime}\}\in P$, then $N_{G}(b^{\prime})\subset N_{G}(a)$. 
\end{lemma}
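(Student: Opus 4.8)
The plan is to reduce everything to property {\bf(P)} and then apply it directly. Since $G$ is very well-covered and $P$ is a perfect matching of $G$, Theorem~\ref{konig} (the equivalence (a)$\Leftrightarrow$(c)) guarantees that $P$ satisfies property {\bf(P)}. This is the only structural input I would need; the rest is a one-step verification using the definition of property {\bf(P)} (Definition~\ref{p-property-def}).

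The core argument proceeds by taking an arbitrary element $c\in N_{G}(b^{\prime})$ and showing $c\in N_{G}(a)$. By definition $\{c,b^{\prime}\}\in E(G)$, and by hypothesis $\{a,b\}\in E(G)$ with $\{b,b^{\prime}\}\in P$. Thus I have two edges, $\{a,b\}$ and $\{c,b^{\prime}\}$, whose non-endpoints-in-$P$ are joined by the matching edge $\{b,b^{\prime}\}\in P$. Property {\bf(P)} then yields $\{a,c\}\in E(G)$, i.e. $c\in N_{G}(a)$. Since $c$ was arbitrary, this gives $N_{G}(b^{\prime})\subset N_{G}(a)$, which is exactly the claim.

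The only point requiring care—and the closest thing to an obstacle—is the degenerate case $c=a$, where $\{a,c\}$ would be a loop rather than a genuine edge. I would rule this out by observing that $a\notin N_{G}(b^{\prime})$: indeed, $\{a,b\}\in E(G)$ and $\{b,b^{\prime}\}\in P$, so by Remark~\ref{sep26-19} the edge $\{a,b^{\prime}\}$ cannot also lie in $E(G)$ (as $G$ has no loops). Hence every $c\in N_{G}(b^{\prime})$ satisfies $c\neq a$, and property {\bf(P)} produces an honest edge $\{a,c\}$. With this remark in hand the argument is complete, so I expect the proof to be short and essentially a direct invocation of property {\bf(P)} together with Remark~\ref{sep26-19}.
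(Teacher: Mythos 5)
Your proof is correct and is essentially the paper's own argument: both invoke Theorem~\ref{konig}(c) to get property \textbf{(P)} for $P$ and then apply it directly to the edges $\{a,b\}$ and $\{c,b'\}$ with $\{b,b'\}\in P$. Your explicit handling of the degenerate case $c=a$ via Remark~\ref{sep26-19} is a small extra precaution that the paper leaves implicit, but it does not change the substance of the argument.
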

\begin{proof}
By (c) in Theorem \ref{konig}, if $c\in N_{G}(b^{\prime})$, then $\{c,a\}\in E(G)$, since $\{b,b^{\prime}\}\in P$. Thus $c\in N_{G}(a)$. Therefore, $N_{G}(b^{\prime})\subset N_{G}(a)$.
\end{proof}

\begin{lemma}\label{remark-(ii)} Let $D$ be a weighted oriented graph
with underlying graph $G$. Suppose $G$ has a perfect matching $P$ 
that satisfies property {\bf{(P)}} such that 
$N_{D}(b)\subset N_{D}^{+}(a)$ for 
$a\in V^{+}$, $b^{\prime}\in N_{D}^{+}(a)$ and $\{b,b^{\prime}\}\in
P$. If $\{c,c^{\prime}\}\in P$ and $c^{\prime}\in N_{D}^{+}(V^+)$,
then $N_{D}^{-}(c)\cap V^{+}=\emptyset$. 
\end{lemma}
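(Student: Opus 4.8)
The plan is to argue by contradiction and to use the standing hypothesis (condition (2)) \emph{twice}, once for each orientation of the matching edge $\{c,c'\}$, finally exploiting the fact that $D$ is an honest orientation of the simple graph $G$: since $|E(D)|=|E(G)|$ and $G$ has no loops, at most one of $(a,d)$, $(d,a)$ can lie in $E(D)$. That last incompatibility is what closes the argument.

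First I would unpack the assumption $c'\in N_D^+(V^+)$: it furnishes a vertex $a\in V^+$ with $(a,c')\in E(D)$, that is $c'\in N_D^+(a)$. Since $\{c,c'\}\in P$ and $c'\in N_D^+(a)$ with $a\in V^+$, applying the hypothesis to the matching edge $\{c,c'\}$ (with $b=c$, $b'=c'$) gives the inclusion
\[
N_D(c)\subset N_D^+(a).
\]
Next I would suppose, for contradiction, that $N_D^-(c)\cap V^+\neq\emptyset$ and pick a vertex $d$ in this intersection. Then $d\in V^+$ and $(d,c)\in E(D)$, so $c\in N_D^+(d)$; moreover $d\in N_D^-(c)\subset N_D(c)$, so the displayed inclusion yields $d\in N_D^+(a)$, i.e.\ $(a,d)\in E(D)$.

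Now comes the second, symmetric application: with $d$ in the role of the weighted vertex, we have $d\in V^+$, $c\in N_D^+(d)$, and $\{c',c\}=\{c,c'\}\in P$, so the hypothesis delivers $N_D(c')\subset N_D^+(d)$. Because $(a,c')\in E(D)$ we have $a\in N_D^-(c')\subset N_D(c')$, hence $a\in N_D^+(d)$, i.e.\ $(d,a)\in E(D)$. Collecting the two conclusions, both $(a,d)$ and $(d,a)$ lie in $E(D)$. As $G$ has no loops this forces $a\neq d$, and then the single undirected edge $\{a,d\}$ of $G$ would be the image of two distinct members of $E(D)$, contradicting $|E(D)|=|E(G)|$. (The degenerate case $a=d$ is excluded directly by Remark~\ref{sep26-19}, since $a$ would then be adjacent to both endpoints $c,c'$ of a matching edge.) Hence no such $d$ exists and $N_D^-(c)\cap V^+=\emptyset$.

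The one genuinely delicate point is the second invocation of the hypothesis: one must notice that it applies with the two endpoints of the matching edge $\{c,c'\}$ interchanged, so that $c$ now plays the part of the out-neighbour ``$b'$'' and $c'$ the part of ``$b$'', and one must keep the in/out directions straight so that the two inclusions $N_D(c)\subset N_D^+(a)$ and $N_D(c')\subset N_D^+(d)$ really produce the \emph{opposite} orientations $(a,d)$ and $(d,a)$ of the same undirected edge. Everything else is a direct chase through the definitions of $N_D^+$ and $N_D^-$.
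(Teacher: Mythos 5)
Your proof is correct and takes essentially the same route as the paper: one application of the hypothesis gives $N_D(c)\subset N_D^+(a)$, a second application with the endpoints of the matching edge $\{c,c'\}$ interchanged gives $N_D(c')\subset N_D^+(d)$, and the two resulting orientations $(a,d),(d,a)\in E(D)$ of a single edge of $G$ yield the contradiction. The only cosmetic difference is that the paper takes an arbitrary $z\in N_D^-(c)$ and shows $w(z)=1$ instead of picking a vertex in $N_D^-(c)\cap V^+$ and contradicting its existence.
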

\begin{proof}
Since $c^{\prime}\in N_{D}^{+}(V^{+})$, there is $x\in V^{+}$ such
that $c^{\prime}\in N_{D}^{+}(x)$. Then, as $\{c,c'\}\in P$, by
hypothesis $N_{D}(c)\subset N_{D}^{+}(x)$. We take $z\in
N_{D}^{-}(c)$, then $z\in N_{D}(c)\subset N_{D}^{+}(x)$. This
implies that $(x,z)\in E(D)$. Now, if $w(z)>1$, then, as $c\in N_D^+(z)$ and
$\{c,c'\}\in P$, by hypothesis
$N_{D}(c^{\prime})\subset N_{D}^{+}(z)$. But $x\in
N_{D}(c^{\prime})$, then $x\in N_{D}^{+}(z)$, i.e., $(z,x)\in E(D)$.
This is a contradiction, since $(x,z)\in E(D)$. Consequently,
$w(z)=1$. Therefore $N_{D}^{-}(c)\cap V^{+}=\emptyset$.       
\end{proof}
Since the unmixed property of $D$ is closed under connected
components, and isolated vertices are unmixed, in the rest of this
section we assume $D$ does not contains isolated vertices.  

We come to the main result of this section.

\begin{theorem}\label{prop-unmixed} 
Let $D=(V(D),E(D),w)$ be a weighted oriented graph whose underlying graph $G$ is
K\"{o}nig. Then, $I(D)$ is unmixed if and only if $D$ satisfies the following two conditions: 
\begin{enumerate}[{\rm (1)}]
\item $G$ has a perfect matching $P$ with property {\bf(P)}, that is,
$G$ has a perfect matching $P$ such that if 
$\{a,b\}$, $\{a^{\prime},b^{\prime}\}\in E(G)$, and
$\{b,b^{\prime}\}\in P$,  
then $\{a,a^{\prime}\}\in E(G)$.  
\item If $a$ is a vertex of $D$ with $w(a)>1$, $b^{\prime}\in
N_{D}^{+}(a)$, and $\{b,b^{\prime}\}\in P$, then $N_{D}(b)\subset
N_{D}^{+}(a)$.
 \end{enumerate}
\end{theorem}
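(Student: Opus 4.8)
The plan is to prove both implications through the strong-cover characterization of unmixedness, namely Theorem~\ref{theorem42}(3): $I(D)$ is unmixed if and only if $I(G)$ is unmixed and $L_3(C)=\emptyset$ for every strong vertex cover $C$ of $D$. The ``graph part'' of condition (1) is common to both directions. Indeed, by Remark~\ref{1star}, $I(G)$ is unmixed if and only if $G$ is well covered; since $G$ is K\"onig, Theorem~\ref{lemma-Ivan} upgrades this to $G$ being very well covered, and then Favaron's Theorem~\ref{konig} produces a perfect matching $P$ with property {\bf(P)}. Thus ``$I(G)$ is unmixed'' is equivalent to condition (1), and the whole problem reduces to matching condition (2) against the requirement that $L_3(C)=\emptyset$ for all strong $C$, always with $P$ a perfect matching satisfying {\bf(P)}.

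For the implication ($\Leftarrow$), assume (1) and (2); I would show directly that no strong cover has a nonempty $L_3$. Suppose $C$ is strong and $x\in L_3(C)$, and let $\{x,x'\}\in P$. By Definition~\ref{strong-cover-defn} there is $(y,x)\in E(D)$ with $y\in V^+$ and $y\in L_2(C)\cup L_3(C)$; in either case $N_D^+(y)\subseteq C$. Applying (2) with $a=y$, $b'=x$, $b=x'$ gives $N_D(x')\subseteq N_D^+(y)\subseteq C$. Since $x'\in N_D(x)\subseteq C$ by Lemma~\ref{Rem-L3}, the same lemma yields $x'\in L_3(C)$. Now $x\in N_D^+(y)\subseteq N_D^+(V^+)$, so Lemma~\ref{remark-(ii)} (whose hypotheses are exactly (1) and (2)) applied to $\{x',x\}\in P$ gives $N_D^-(x')\cap V^+=\emptyset$. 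This contradicts strongness of $C$ at $x'$, which demands a heavy in-neighbor of $x'$. Hence $L_3(C)=\emptyset$, and Theorem~\ref{theorem42}(3) shows $I(D)$ is unmixed. This direction is essentially formal once Lemma~\ref{remark-(ii)} is available.

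For the implication ($\Rightarrow$), assume $I(D)$ is unmixed; condition (1) was obtained above. For (2) I would argue by contradiction: if (2) fails there are $a\in V^+$, $b'\in N_D^+(a)$, $\{b,b'\}\in P$ and $c\in N_D(b)\setminus N_D^+(a)$, and I would manufacture a strong cover $C$ with $b'\in L_3(C)$, contradicting Theorem~\ref{theorem42}(3). First, Lemma~\ref{lemma3} applied to the edge $\{a,b'\}$ and the matching edge $\{b,b'\}$ gives $N_G(b)\subseteq N_G(a)$, so $c\in N_G(a)$, and since $c\notin N_D^+(a)$ in fact $(c,a)\in E(D)$; also $c\neq b'$ and, by Remark~\ref{sep26-19}, $c\not\sim b'$, while $c\notin N_D^+(a)$. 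The crucial point is that $c$ \emph{blocks} the neighbors of $b'$: for any $z\in N_G(b')$, Lemma~\ref{lemma3} applied to the edge $\{z,b'\}$ gives $N_G(b)\subseteq N_G(z)$, whence $z\sim c$. I would then pick a \emph{maximal} independent set $S$ with $b',c\in S$ and $N_D^+(a)\cap S=\{b'\}$, and set $C:=(V\setminus S)\cup\{b'\}$. Then $C$ is a vertex cover with $b'\in L_3(C)$; since $V\setminus S$ is a minimal cover and every $z\in N_G(b')$ is adjacent to $c\in S\setminus\{b'\}=C^c$, one checks $L_3(C)=\{b'\}$. Finally $N_D^+(a)\cap S=\{b'\}$ forces $N_D^+(a)\subseteq C$, i.e. $a\notin L_1(C)$; as $a\in V^+$ and $(a,b')\in E(D)$, the vertex $a$ witnesses strongness at $b'$, so $C$ is a strong cover with $L_3(C)\neq\emptyset$, the desired contradiction.

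The hard part is producing the independent set $S$ in that last step. I need $S$ to be maximal (so that $V\setminus S$ is a minimal cover and, together with the $c$-blocking, no spurious vertex falls into $L_3(C)$) while simultaneously meeting $N_D^+(a)$ only in $b'$ (so that the unique guaranteed heavy witness $a$ lands in $L_2(C)\cup L_3(C)$ rather than $L_1(C)$). These two demands pull against one another: avoiding an out-neighbor $w\in N_D^+(a)\setminus\{b'\}$ can clash with maximality precisely when $w\not\sim b'$ and $N_G(w)\subseteq N_G(b')$, since then $w$ has no neighbor available to go into $S$. I expect to resolve this by choosing the failing configuration $(a,b',b)$ as economically as possible and exploiting Lemma~\ref{lemma2} and Remark~\ref{sep26-19}: already no out-neighbor of $a$ can be the matching partner of another out-neighbor of $a$, and Lemma~\ref{lemma2} constrains the partners of the neighbors of $a$ to be mutually non-adjacent, which is what lets one dominate every troublesome $w$ from inside $S$. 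Verifying that such an $S$ always exists, and hence that the manufactured $C$ is genuinely strong, is the technical heart of the theorem.
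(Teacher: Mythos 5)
Your proof of ($\Leftarrow$) and your reduction of condition (1) to the unmixedness of $I(G)$ are correct, and they coincide with the paper's argument (same use of Theorem~\ref{theorem42}(3), Lemma~\ref{Rem-L3} and Lemma~\ref{remark-(ii)}). The problem is the implication ($\Rightarrow$), exactly at the step you flagged: the maximal independent set $S$ with $b',c\in S$ and $N_D^+(a)\cap S=\{b'\}$ need not exist, and this cannot be repaired by an ``economical'' choice of the failing configuration, because the obstruction is forced by the very lemmas you plan to use. Indeed, if some $u\in N_D^+(a)\setminus\{b'\}$ has matching partner $u'$ adjacent to $c$, then Lemma~\ref{lemma3} (edge $\{c,u'\}$, matching edge $\{u',u\}$) gives $N_G(u)\subset N_G(c)$, so every maximal independent set containing $c$ must contain $u$, and your requirement is unsatisfiable. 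A concrete instance: vertices $a,a'',b,b',u,u',c,c'$, perfect matching $P=\{\{a,a''\},\{b,b'\},\{u,u'\},\{c,c'\}\}$, further edges $\{a,b'\},\{a,u\},\{a,c\},\{b,c\},\{b',u'\},\{c,u'\}$ (property \textbf{(P)} holds, so $G$ is very well-covered and K\"onig); orient so that $N_D^+(a)=\{b',u\}$ and $c$ is a source, e.g. $(a,b'),(a,u),(c,a),(c,b),(c,u'),(c,c'),(a'',a),(b,b'),(u',b'),(u,u')$, and let $a$ be the only vertex of weight $>1$. Condition (2) fails precisely at $(a,b',b,c)$ and at $(a,u,u',c)$. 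For the first configuration, $N_G(u)=\{a,u'\}\subset N_G(c)\cap N_G(b')$, so any maximal independent set containing $b'$ or $c$ contains $u$, which puts $u\in C^c\cap N_D^+(a)$ and so disqualifies $a$ as a strong witness; for the second, $N_G(b')\subset N_G(c)$ forces $b'\in S$ whenever $c\in S$. One checks that in this $D$ no cover of your form $(V\setminus S)\cup\{b'\}$ (or $(V\setminus S)\cup\{u\}$) is strong at all. Since your plan derives the existence of $S$ from property \textbf{(P)}, Lemmas~\ref{lemma2}--\ref{lemma3} and Remark~\ref{sep26-19} alone (unmixedness enters only at the final contradiction), this example shows the intermediate claim is simply false, not merely hard.

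The paper avoids the trap by abandoning both constraints that cause the clash: its strong cover is not minimal-plus-one-vertex and its $L_3$ is not a singleton. It takes a maximal stable set $S$ containing the set $B$ of matching partners of $N_D^+(a)$ (stable by Lemma~\ref{lemma2}), lets $A$ be the set of matching partners of $N_D(c')$, and puts $C=N_D^+(a)\cup N_D(c')\cup\bigl((V\setminus S)\setminus A\bigr)$. Then $L_3(C)\subset N_D^+(a)$, so the single heavy vertex $a$ certifies strongness for every vertex of $L_3(C)$, however large that set is; in the example above this $C$ equals $\{a,b,b',u,u',c'\}$ and contains two whole matching edges. The contradiction is then drawn not from $L_3(C)\neq\emptyset$ but from cardinality: by Theorem~\ref{theorem42}, unmixedness forces every strong cover to have size $\tau(G)=|P|$, hence to meet each edge of $P$ exactly once, while $C$ contains both $b$ and $b'$. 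If you want to salvage your outline, this is the modification you need: allow the cover to contain several full matching edges and let $a$ dominate all of $L_3(C)$, rather than trying to isolate a single $L_3$-vertex inside an almost-minimal cover.
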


\begin{proof}
$\Rightarrow)$ By Theorem \ref{theorem42}, $I(G)$ is unmixed. Hence by Remark
\ref{1star} and Theorem \ref{lemma-Ivan}, $G$ is very well-covered,
since $G$ is K\"{o}nig. Thus, by Theorem \ref{konig}, $G$ has a
perfect matching $P$ satisfying condition (1). 
Now, assume $a\in V^{+}$, $b^{\prime}\in N_{D}^{+}(a)$, and
$\{b,b^{\prime}\}\in P$. We set      
\begin{center}
$B:=\{d\in V(G)\mid\textit{there is } 
d^{\prime}\in N_{D}^{+}(a) \textit{ such that } \{d,d^{\prime}\}\in P\}$.
\end{center}
\quad Note that $b\in B$. By Lemma \ref{lemma2}, $B$ is a stable set
of $G$. Pick a maximal
stable set $S$ of $G$ such that $B\subset S$. By Remark \ref{1star}, 
$S^{\prime}:=V(G)\setminus S$ is a
minimal vertex 
cover of $G$. Since $G$ is very-well covered, one has
$$|S|=\tau(G)=\frac{|V(G)|}{2}=|P|.$$
\quad Hence $|S\cap e|=1$ for each
$e\in P$.  Consequently,
$b^{\prime}\in S^{\prime}$, 
since $b\in B\subset S$ and $\{b,b^{\prime}\}\in P$. One has the 
inclusion $N_{D}^{+}(a)\subset S^{\prime}$. Indeed, take $d'\in
N_D^+(a)$. Since $P$ is a perfect matching of $G$, there is $d\in
V(G)$ such that $\{d,d'\}\in P$. As $d\in B$, we get $d\in S$. Hence
$d'\notin S$ because $S$ is a stable set.

We will prove (2) by contradiction. Suppose there is $c^{\prime}\in 
N_{D}(b)\setminus N_{D}^{+}(a)$.  Then, $c^{\prime}\in S^{\prime}$,
since $b\in S$, $\{c',b\}\in E(G)$ and $S$ is stable. We set 
\begin{eqnarray*}
& &A:=\{x^{\prime}\in V(G)\mid \textit{there is } x\in
N_{D}(c^{\prime}) \textit{ such that } \{x,x^{\prime}\}\in
P\},\mbox{ and }\\
& &C:=N_{D}^{+}(a)\cup N_{D}(c^{\prime})\cup (S^{\prime}\setminus A).
\end{eqnarray*}
\quad By
Lemma~\ref{lemma2}, $A$ is a stable set of $G$; and $b'\in A$. As $P$
is a perfect matching of $G$, there is $c\in V(G)$ such
that $\{c,c^{\prime}\}\in P$. Then $c^{\prime}\in A$. Thus,
$c^{\prime}\notin C$, since $c^{\prime}\in S^{\prime}$ and
$c^{\prime}\notin N_{D}^{+}(a)$. Now we take $e\in E(G)$, then there
is $y\in e\cap S^{\prime}$, since $S^{\prime}$ is a vertex cover. If
$y\in S^{\prime}\setminus A$, then $y\in C$. Now, if $y\notin
S^{\prime}\setminus A$, then $y\in S^{\prime}\cap A$ and there is
$y^{\prime}\in N_{D}(c^{\prime})$ such that $\{y,y^{\prime}\}\in P$.
Then, by Lemma \ref{lemma3}, $N_D(y)\subset N_D(c')$. 
Hence, if $e=\{y,y_1\}$, then $y_1\in
N_{D}(c^{\prime})\subset C$. Therefore $C$ is a vertex cover of $G$.

Next we show the inclusion $L_3(C)\subset N_{D}^{+}(a)$. 
Take $x'\in L_3(C)$, i.e., $x'\in C$ and $N_D(x')\subset C$ (see
Lemma~\ref{Rem-L3}). 
If $x'\in N_D(c')$, then
$c'\in N_D(x')\subset C$ and $c'\in C$, which is impossible since
$c^{\prime}\notin C$. Now, assume $x^{\prime}\in S^{\prime}\setminus A$, then
 there is $\{x^{\prime}, x\}\in P$ with $x\in S$, because $P$ is a
 perfect matching of $G$ and $|S'\cap e|=1$ for $e\in P$. 
Consequently,
 $x\notin N_{D}(c^{\prime})$, since $x^{\prime}\notin A$. Then,
 $x\notin C$, since $x\in S$ and $N_{D}^{+}(a)\subset S^{\prime}$.
 Hence, $x^{\prime}\notin L_{3}(C)$ because $x$ is in 
 $N_D(x')\setminus C$, a contradiction. Thus, $x' \in
 C\setminus(N_D(c')
\cup(S'\setminus A))\subset N^+_D(a)$. This implies $L_3(C) \subset N^+_D(a)$.

The next step is to prove that $C$ is a strong vertex cover. As 
$\{a,b^{\prime}\}, \{b,c^{\prime}\}\in E(G)$ and $\{b,b^{\prime}\}\in
P$, by condition $(1)$ we get $\{a,c^{\prime}\}\in E(G)$. Thus,
 $a\in N_{D}(c^{\prime})\subset C$. Also, $a\notin L_{1}(C)$, since
 $N_{D}^{+}(a)\subset C$. Hence, $C$ is a strong vertex cover, since
 $L_{3}(C)\subset N_{D}^{+}(a)$, $a\in L_2(C)\cup L_3(C)$, and
 $w(a)>1$. 
 
 Then, by Theorem \ref{theorem42} and Remark
 \ref{1star}, $|C|=\tau(G)=|P|$, since $I(D)$ is unmixed and $G$ is
 very well-covered. Then, $|C\cap e|=1$ for $e\in P$. This is a
 contradiction, since $b^{\prime},b\in N_{D}^{+}(a)\cup
 N_{D}(c^{\prime})\subset C$ and $\{b,b^{\prime}\}\in P$. Therefore,
$N_{D}(b)\subset N_{D}^{+}(a)$ and condition (2) holds.             

$\Leftarrow)$ By $(1)$ and Theorem \ref{konig}, $G$ is very
well-covered. Thus, by Remark \ref{1star}, $I(G)$ is unmixed. Now,
let $C$ be a strong vertex cover of $D$. Suppose
$L_{3}(C)\neq\emptyset$, then there is $b^{\prime}\in L_{3}(C)$. By
Lemma~\ref{Rem-L3}, $N_{D}(b^{\prime})\subset C$. Since $C$ is
strong, there is $a\in (V^{+}\cap C)\setminus L_{1}(C)$ such that
$(a,b^{\prime})\in E(D)$. Since $a\notin L_{1}(C)$, we have
$N_{D}^{+}(a)\subset C$. Furthermore, by (2), $N_{D}(b)\subset N_{D}^{+}(a)$       
 where $\{b,b^{\prime}\}\in P$. Hence, $b\in C$ and $b\in L_{3}(C)$, 
 since $N_{D}(b^{\prime})\cup N_{D}(b)\subset C$. By
 Lemma~\ref{remark-(ii)}, $N^{-}_{D}(b)\cap V^{+}=\emptyset$ because
 $\{b,b'\}\in P$ and $b'\in N_D(b)\subset N_D^+(a)$. This is
 a contradiction, 
since $C$ is strong and $b\in L_{3}(C)$. 
This implies, $L_{3}(C)=\emptyset$ for any strong vertex cover $C$ of
$D$. Therefore, by Theorem \ref{theorem42}, $I(D)$ is unmixed, since
$I(G)$ is unmixed.  
 \end{proof}

\begin{remark}\rm\label{azul}
By the proof of $\Rightarrow)$ in Theorem
\ref{prop-unmixed}, 
we have that: If $I(D)$ is unmixed and $P$ is a perfect matching of
$G$ with the 
property {\bf{(P)}}, then $P$ satisfies $(2)$ of Theorem \ref{prop-unmixed}. 
\end{remark}

If $G$ is an unmixed K\"onig simple hypergraph without isolated vertices, 
then $G$ has a perfect matching $P$ with $|P|=\tau(G)=\nu(G)$
\cite[Lemma~2.3]{susan-reyes-vila}, and $G$ is very 
well-covered if $G$ is a graph. The next result gives an analogous
version for weighted oriented graphs.

\begin{corollary}\label{jun24-19}
Let $D$ be a weighted oriented graph whose underlying graph $G$ is
K\"onig. Then $I(D)$ is unmixed if and only if $D$ satisfies the
following two conditions: 
\begin{enumerate}[{\rm (1)}]
\item $G$ is very well-covered.
\item If $\{b,b^{\prime}\}$ is in a perfect matching $P$ of $G$, 
$b^{\prime}\in N_{D}^{+}(a)$ and $w(a)>1$, then
$N_{D}(b)\subset N_{D}^{+}(a)$.
\end{enumerate}
\end{corollary}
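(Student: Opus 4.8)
\textbf{Proof proposal for Corollary~\ref{jun24-19}.}

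The plan is to deduce this corollary directly from Theorem~\ref{prop-unmixed}, which already characterizes the unmixed property of $I(D)$ for K\"onig $G$ in terms of conditions (1) and (2) of that theorem. The essential observation is that the two statements differ only in how they record the first condition: Theorem~\ref{prop-unmixed}(1) asserts the existence of a perfect matching $P$ with property {\bf(P)}, whereas condition (1) here asks that $G$ be very well-covered. Since $G$ is assumed K\"onig throughout, Theorem~\ref{konig} gives the equivalence between ``$G$ has a perfect matching with property {\bf(P)}'' and ``$G$ is very well-covered,'' so the two first conditions are interchangeable. This is the core of the argument, and it is not really an obstacle so much as a bookkeeping step.

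For the forward direction, I would assume $I(D)$ is unmixed. Theorem~\ref{prop-unmixed} then yields a perfect matching $P$ with property {\bf(P)} satisfying condition (2) of that theorem; by Theorem~\ref{konig}(b)$\Rightarrow$(a), $G$ is very well-covered, giving condition (1) here. The subtlety is that the present condition (2) is phrased for an \emph{arbitrary} perfect matching $P$ of $G$ (``if $\{b,b'\}$ is in a perfect matching $P$''), not merely for the specific $P$ produced by Theorem~\ref{prop-unmixed}. This is exactly where Remark~\ref{azul} is needed: it records that whenever $I(D)$ is unmixed, \emph{every} perfect matching of $G$ with property {\bf(P)} satisfies condition (2). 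Since $G$ is very well-covered, Theorem~\ref{konig}(c) guarantees that every perfect matching of $G$ has property {\bf(P)}, so in fact every perfect matching satisfies (2), and condition (2) here holds in its universally quantified form.

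For the converse, I would assume conditions (1) and (2) of the corollary. From (1) and Theorem~\ref{konig}(a)$\Rightarrow$(b), $G$ has a perfect matching $P$ with property {\bf(P)}, so Theorem~\ref{prop-unmixed}(1) holds. By Theorem~\ref{konig}(c), this $P$ satisfies property {\bf(P)}, and applying the present condition (2) to this particular $P$ gives precisely Theorem~\ref{prop-unmixed}(2) for the same $P$. Hence both hypotheses of Theorem~\ref{prop-unmixed} are met, and $I(D)$ is unmixed. The only point demanding care throughout is the direction of the quantifier over perfect matchings, and as noted this is resolved entirely by Theorem~\ref{konig}(c) together with Remark~\ref{azul}; no new combinatorial work on strong vertex covers is required.
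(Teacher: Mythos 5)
Your proposal is correct and follows essentially the same route as the paper: both directions are reduced to Theorem~\ref{prop-unmixed} via Theorem~\ref{konig}, with Remark~\ref{azul} handling the forward direction's condition (2). Your explicit treatment of the quantifier over perfect matchings (resolved by Theorem~\ref{konig}(c) together with Remark~\ref{azul}) is exactly what the paper's terser proof relies on implicitly.
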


\begin{proof} $\Rightarrow$) Assume that $I(D)$ is unmixed. Then, 
by Theorem \ref{prop-unmixed}(1) and Theorem \ref{konig}, $G$ is very
well-covered, that is conditions (1) holds. That condition (2) holds follows
from Remark \ref{azul} and Theorem \ref{konig}.

$\Leftarrow$) This implication follows using
Theorems~\ref{prop-unmixed} and \ref{konig}.
\end{proof}

The next result shows that Theorem~\ref{prop-unmixed} also holds when
$G$ has no $3$-, $5$-, or $7$-cycles.

\begin{proposition}\label{Coro-37}
Let $D$ be a weighted oriented graph whose underlying graph $G$  
has no $3$-, $5$-, or $7$-cycles. Then $I(D)$ is unmixed if and only
if $D$ satisfies the following conditions:  
\begin{enumerate}[{\rm (1)}]
\item $G$ has a perfect matching $P$ with property {\bf(P)}.
\item If $a\in V(D)$, $w(a)>1$, $b^{\prime}\in N_{D}^{+}(a)$, and $\{b,b^{\prime}\}\in P$, then $N_{D}(b)\subset N_{D}^{+}(a)$. 
 \end{enumerate}
\end{proposition}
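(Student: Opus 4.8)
The plan is to reduce this statement to the already-proved Theorem~\ref{prop-unmixed}, whose hypothesis requires the underlying graph to be K\"onig. Conditions (1) and (2) here are \emph{verbatim} conditions (1) and (2) of that theorem, so once I know $G$ is K\"onig the equivalence is immediate. Thus the entire content of the argument is to show, in each direction, that the no-$3$-, $5$-, $7$-cycle assumption forces $G$ to be K\"onig. The bridge in both directions is the chain \emph{well-covered $\Rightarrow$ very well-covered $\Rightarrow$ K\"onig}: the first implication is exactly where the cycle hypothesis is spent, via Theorem~\ref{lemma-Ivan}, and the second is Remark~\ref{2star}.

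For the forward implication I would assume $I(D)$ is unmixed. By Theorem~\ref{theorem42} this gives that $I(G)$ is unmixed, hence $G$ is well-covered by Remark~\ref{1star}. Since $G$ has no $3$-, $5$-, or $7$-cycles, Theorem~\ref{lemma-Ivan} promotes well-covered to very well-covered, and then Remark~\ref{2star} yields that $G$ is K\"onig. Now the hypotheses of Theorem~\ref{prop-unmixed} are satisfied, so its forward direction produces conditions (1) and (2) directly.

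For the converse I would assume (1) and (2). Condition (1) provides a perfect matching of $G$ with property {\bf(P)}, so by Theorem~\ref{konig} the graph $G$ is very well-covered, and by Remark~\ref{2star} it is K\"onig. Hence Theorem~\ref{prop-unmixed} applies again, and its backward direction gives that $I(D)$ is unmixed. I do not expect a genuine obstacle: the proposition is essentially a corollary of Theorem~\ref{prop-unmixed}, the cycle hypothesis serving only to supply the K\"onig property through Theorem~\ref{lemma-Ivan}. The sole point deserving care is that the two directions reach the K\"onig property by different routes---from unmixedness of $I(G)$ in one case and from property {\bf(P)} in the other---yet both funnel into the same application of Theorem~\ref{prop-unmixed}.
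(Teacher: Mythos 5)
Your proposal is correct and follows exactly the paper's own proof: both directions reduce to Theorem~\ref{prop-unmixed} by establishing the K\"onig property, using Theorem~\ref{theorem42}, Remark~\ref{1star}, and Theorem~\ref{lemma-Ivan} in the forward direction, and Theorem~\ref{konig} with Remark~\ref{2star} in the converse.
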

 
 \begin{proof} 
 $\Rightarrow)$
By Theorem \ref{theorem42}, $I(G)$ is unmixed. Thus, by Remark
\ref{1star} and Theorem \ref{lemma-Ivan}, $G$ is very well-covered.
Hence, by Remark \ref{2star}, $G$ is K\"{o}nig. Therefore by Theorem
\ref{prop-unmixed}, $D$ satisfies $(1)$ and $(2)$.     

$\Leftarrow)$ 
By (1) and Theorem \ref{konig}, $G$ is very well-covered. Hence, by
Remark \ref{2star}, $G$ is K\"{o}nig. Therefore, by Theorem
\ref{prop-unmixed}, $D$ is unmixed.  
 \end{proof}
 
 \begin{corollary}\label{sep15-19}
 If $D$ is a weighted oriented graph whose underlying graph $G$ is 
 very well-covered, then $G$ has a perfect matching $P$ and 
the following conditions are equivalents: 
 \begin{enumerate}[{\rm (a)}]
\item $I(D)$ is unmixed.
\item If $a\in V(D)$, $w(a)>1$, $b^{\prime}\in N_{D}^{+}(a)$, and 
$\{b,b^{\prime}\}\in P$, then $N_{D}(b)\subset N_{D}^{+}(a)$.
 \end{enumerate}
 \end{corollary}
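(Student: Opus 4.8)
The plan is to deduce this directly from Theorem~\ref{prop-unmixed}, exploiting two facts about very well-covered graphs: they are K\"onig (Remark~\ref{2star}), and, by Theorem~\ref{konig}(c), they possess at least one perfect matching with every such matching satisfying property {\bf(P)}. First I would fix a perfect matching $P$ of $G$, whose existence is guaranteed by Theorem~\ref{konig}; this settles the first assertion of the corollary. The key observation is that, because $G$ is very well-covered, this $P$ automatically has property {\bf(P)}, so condition (1) of Theorem~\ref{prop-unmixed} comes for free and the only remaining content is the neighborhood condition (2)---which is verbatim statement (b).

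For the implication (a)$\Rightarrow$(b) I would argue as follows. Assume $I(D)$ is unmixed. Since the fixed $P$ has property {\bf(P)}, Remark~\ref{azul} tells us that $P$ satisfies condition (2) of Theorem~\ref{prop-unmixed}; but that condition is exactly (b). Hence (b) holds. For (b)$\Rightarrow$(a) I would observe that (b) is condition (2) of Theorem~\ref{prop-unmixed} for the matching $P$, while condition (1) holds because $P$ has property {\bf(P)}. As $G$ is K\"onig, Theorem~\ref{prop-unmixed} then yields that $I(D)$ is unmixed. Equivalently, one can simply invoke Corollary~\ref{jun24-19}, whose first hypothesis is now automatic.

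The one delicate point---and precisely what Remark~\ref{azul} is designed to handle---is that Theorem~\ref{prop-unmixed} asserts only the \emph{existence} of some perfect matching satisfying (1) and (2), whereas the corollary fixes a particular $P$ in advance and phrases (b) in terms of it. The resolution is that in a very well-covered graph \emph{every} perfect matching has property {\bf(P)} by Theorem~\ref{konig}(c), and under the unmixed hypothesis Remark~\ref{azul} upgrades this to: every property-{\bf(P)} matching satisfies (2). Thus the choice of $P$ is immaterial, which is exactly what legitimizes stating the equivalence for an arbitrarily fixed perfect matching. I expect this matching-universality step to be the only nonformal ingredient; everything else is a direct translation of Theorem~\ref{prop-unmixed} into the very well-covered setting.
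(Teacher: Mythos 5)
Your proposal is correct and follows essentially the same route as the paper's proof: $G$ is K\"onig by Remark~\ref{2star}, the fixed $P$ has property {\bf(P)} by Theorem~\ref{konig}, Remark~\ref{azul} gives (a)$\Rightarrow$(b), and Theorem~\ref{prop-unmixed} gives (b)$\Rightarrow$(a). Your explicit discussion of why the choice of $P$ is immaterial (every perfect matching of a very well-covered graph satisfies {\bf(P)}) is exactly the point the paper handles implicitly via Theorem~\ref{konig}(c) and Remark~\ref{azul}.
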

 \begin{proof}
 By Remark \ref{2star}, $G$ is K\"{o}nig. Furthermore, by Theorem
 \ref{konig}, $G$ has a perfect matching $P$ with property
 {\bf{(P)}}.  

 (a)$\Rightarrow$ (b): By Remark \ref{azul}, $P$ satisfies (b).

(b)$\Rightarrow$(a): By Theorem~\ref{konig} $D$ satisfies
condition (1) of Theorem~\ref{prop-unmixed}. Then, by Theorem
\ref{prop-unmixed}, $I(D)$ is unmixed, 
since $D$ satisfies (b).
  \end{proof}

\section{Cohen--Macaulay weighted oriented graphs}\label{c-m-section}

In this section we classify the Cohen--Macaulay property of a weighted
oriented graph $D$ whose underlying graph $G$ is K\"onig or $G$ is a
graph without $3$- and $5$-cycles. If
$G$ is a K\"onig graph without $4$-cycles or
$G$ has girth greater than $7$, we prove 
that $I(D)$ is unmixed if and only if $I(D)$ is 
Cohen--Macaulay. 

\begin{definition}
Let $D$ be a weighted oriented graph and $A\subset V(D)$, then $D\setminus A$ is the weighted oriented graph where $V(D\setminus A)=V(D)\setminus A$, $E(D\setminus A)=\{(a,b)\in E(D)\mid \{a,b\}\cap A=\emptyset \}$ and $w_{D}(x)=w_{D\setminus A}(x)$ for $x\in V(D\setminus A)$.
\end{definition}

The following result is well-known. It follows from the depth lemma
\cite[Lemma 2.3.9]{monalg-2}.

\begin{proposition}\label{propo-estrella2}
Let $I\subset R$ be a graded ideal and let $f$ be a
homogeneous polynomial of $R$ which is a zero-divisor of $R/I(D)$. 
The following hold.
\begin{enumerate}[{\rm (1)}]
\item If $I$ is unmixed and $f\notin I$, then $(I\colon f)$ is unmixed
and ${\rm ht}(I)={\rm ht}(I\colon f)={\rm ht}(I,f)$.
\item If $R/(I,f)$ and $R/(I\colon f)$ are Cohen--Macaulay, then $R/I$ is Cohen--Macaulay.
\end{enumerate}
\end{proposition}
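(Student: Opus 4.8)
My plan is to derive both statements from the single short exact sequence of graded $R$-modules induced by multiplication by $f$,
$$0 \longrightarrow (R/(I\colon f))(-d) \xrightarrow{\ \cdot f\ } R/I \longrightarrow R/(I,f) \longrightarrow 0, \qquad d = \deg f.$$
First I would verify its exactness directly: the map $\overline{g}\mapsto \overline{fg}$ is injective because $fg\in I$ precisely when $g\in (I\colon f)$, its image is $(I+fR)/I$, and hence its cokernel is $R/(I,f)$. Statement (1) then becomes an associated-primes computation on this sequence, and statement (2) an application of the depth lemma to it.

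For (1), the injection realizes $R/(I\colon f)$, up to the degree shift, as a submodule of $R/I$, so ${\rm Ass}(R/(I\colon f))\subseteq {\rm Ass}(R/I)$. Since $I$ is unmixed, every prime in ${\rm Ass}(R/I)$ has height ${\rm ht}(I)$; therefore so does every prime in ${\rm Ass}(R/(I\colon f))$, which shows $(I\colon f)$ is unmixed with ${\rm ht}(I\colon f)={\rm ht}(I)$ (and $(I\colon f)\neq R$ because $f\notin I$). For $(I,f)$ I would use that $f$ is a zero-divisor of $R/I$, so $f$ lies in some $P\in {\rm Ass}(R/I)$; then $(I,f)\subseteq P$ gives ${\rm ht}(I,f)\le {\rm ht}(P)={\rm ht}(I)$, while $I\subseteq (I,f)$ gives the reverse inequality. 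Hence ${\rm ht}(I)={\rm ht}(I\colon f)={\rm ht}(I,f)$.

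For (2), I would feed the same sequence into the depth lemma \cite[Lemma~2.3.9]{monalg-2} (the shift does not affect depth), obtaining
$${\rm depth}(R/I)\ \ge\ \min\{{\rm depth}(R/(I\colon f)),\ {\rm depth}(R/(I,f))\}.$$
As $R/(I\colon f)$ and $R/(I,f)$ are Cohen--Macaulay, their depths equal their dimensions, so the right-hand side is $\min\{\dim R/(I\colon f),\ \dim R/(I,f)\}$. To finish I need this minimum to equal $\dim R/I$, equivalently (as $R$ is catenary) ${\rm ht}(I\colon f)={\rm ht}(I,f)={\rm ht}(I)$. These are exactly the height identities of part (1), valid once $I$ is unmixed: writing $I=\bigcap_i Q_i$ for a primary decomposition, $f\notin I$ keeps a top-dimensional component in $(I\colon f)$, and $f$ being a zero-divisor places it in a top-dimensional prime over $(I,f)$, so no dimension is lost on either side. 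Combined with the always-valid bound ${\rm depth}(R/I)\le \dim R/I$, this forces equality, i.e.\ $R/I$ is Cohen--Macaulay.

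The delicate point, and the main obstacle, is precisely this equidimensionality in (2). The depth lemma only bounds ${\rm depth}(R/I)$ below by the \emph{minimum} of the two depths, so the argument collapses the moment $\dim R/(I\colon f)\neq \dim R/(I,f)$: a Cohen--Macaulay quotient of strictly smaller dimension drags the minimum down and yields nothing. I would therefore be careful to invoke the height identities of (1) (equivalently, that $I$ is equidimensional) to certify that both Cohen--Macaulay quotients share the dimension of $R/I$ before quoting the depth lemma; this is also the one step where the hypothesis that $f$ is a zero-divisor, rather than an arbitrary non-unit, is genuinely used.
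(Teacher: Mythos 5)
Your proof is correct and is essentially the paper's own: the paper dispatches this proposition with the one-line remark that it ``follows from the depth lemma'' \cite[Lemma~2.3.9]{monalg-2}, and your short exact sequence $0 \to (R/(I\colon f))(-d) \xrightarrow{\ \cdot f\ } R/I \to R/(I,f) \to 0$, with the associated-primes argument for (1) and the depth lemma for (2), is exactly the standard argument that citation stands for.

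One substantive remark: the ``delicate point'' you flag is not just a subtlety of your write-up but a genuine defect of the literal statement. Without an unmixedness (equidimensionality) hypothesis on $I$, part (2) is false: take $R=k[x,y]$, $I=(x^2,xy)$, $f=x$. Then $f$ is a homogeneous zero-divisor on $R/I$ with $f\notin I$, and $(I\colon f)=(x,y)$, $(I,f)=(x)$, so $R/(I\colon f)\cong k$ and $R/(I,f)\cong k[y]$ are both Cohen--Macaulay; yet ${\rm depth}\, R/I=0<1=\dim R/I$, so $R/I$ is not Cohen--Macaulay. (The depth lemma gives no contradiction here, since it only bounds ${\rm depth}\, R/I$ below by the minimum of the two depths, which is $0$.) So your decision to keep the hypotheses of (1) in force when proving (2) is necessary, not merely prudent, and it matches how the paper actually uses the result: in the proof of Theorem~\ref{theoremCM}, part (2) is applied only to $I(D)$ and to $J=(I(D)\colon x)$, which at that point are unmixed (by Theorem~\ref{prop-unmixed} and by part (1), respectively), so both Cohen--Macaulay quotients have dimension equal to $\dim R/I$ and the depth lemma closes the argument exactly as you describe.
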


We come to the main result of this section.

\begin{theorem}\label{theoremCM}
Let $D=(V(D),E(D),w)$ be a weighted oriented graph whose underlying graph $G$ is
K\"{o}nig. Then, $I(D)$ is Cohen--Macaulay if and only if $D$
satisfies the following two conditions:  
\begin{enumerate}[{\rm (1)}]
\item $G$ has a perfect matching $P$ such that if $\{a,b\}$,
$\{a^{\prime},b^{\prime}\}\in E(G)$, and $\{b,b^{\prime}\}\in P$, then
$\{a,a^{\prime}\}\in E(G)$. Furthermore $G$ has no $4$-cycles 
with two edges in $P$.
\item If $a\in V(D)$, $w(a)>1$, $b^{\prime}\in N_{D}^{+}(a)$, 
and $\{b,b^{\prime}\}\in P$, then $N_{D}(b)\subset N_{D}^{+}(a)$.
 \end{enumerate}
\end{theorem}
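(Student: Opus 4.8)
The plan is to read conditions (1) and (2) as the single statement ``$I(D)$ is unmixed and $I(G)$ is Cohen--Macaulay,'' and then prove each implication with the machinery already in place. Property {\bf(P)} of $P$ together with (2) is exactly the unmixedness criterion of Theorem~\ref{prop-unmixed}, while property {\bf(P)} together with the absence of a $4$-cycle having two edges in $P$ is, by Theorem~\ref{3-5cycles} and Remark~\ref{2star}, equivalent to $I(G)$ being Cohen--Macaulay. So the theorem amounts to: for K\"onig $G$, $I(D)$ is Cohen--Macaulay if and only if it is unmixed and $I(G)$ is Cohen--Macaulay. The forward implication is then routine. If $I(D)$ is Cohen--Macaulay it is unmixed \cite{Mats}, and $I(G)$ is Cohen--Macaulay by Proposition~\ref{teorema-estrella}. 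Since $I(G)$ is unmixed and $G$ is K\"onig, Theorem~\ref{lemma-Ivan} makes $G$ very well-covered, so by Theorem~\ref{konig}(c) every perfect matching has property {\bf(P)}. Applying Theorem~\ref{3-5cycles} componentwise produces a perfect matching $P$ with no $4$-cycle having two edges in $P$; this $P$ has property {\bf(P)}, giving condition (1), and since $I(D)$ is unmixed, Remark~\ref{azul} then yields condition (2).

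The substance is the converse. Assuming (1) and (2), Theorem~\ref{prop-unmixed} gives that $I(D)$ is unmixed and Theorem~\ref{3-5cycles} that $I(G)$ is Cohen--Macaulay; I must promote this to Cohen--Macaulayness of $I(D)$. First I would use Proposition~\ref{propo-estrella} to reduce to $w(x)\in\{1,2\}$, observing that passing from $D$ to $D'$ leaves $G$, $P$, the orientation and the set $V^{+}$ unchanged, so the K\"onig hypothesis and conditions (1) and (2) are preserved. I then induct on $|V(D)|$, the base case being a single edge. As $G$ is K\"onig, $I(G)$ is Cohen--Macaulay, and $D$ has no isolated vertices, Lemma~\ref{corolario-grado1} gives a vertex $v$ of degree $1$; its unique neighbor $u$ satisfies $\{u,v\}\in P$ (a pendant edge lies in every perfect matching), and $v$ is a source or sink, so $w(v)=1$. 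I would split $I=I(D)$ along $f=x_u$ and invoke Proposition~\ref{propo-estrella2}(2): it suffices that $R/(I,x_u)$ and $R/(I\colon x_u)$ be Cohen--Macaulay. The first is easy: setting $x_u=0$ deletes all edges at $u$ and isolates $v$, so $R/(I,x_u)$ is, up to one free variable, the ring of $I(D\setminus\{u,v\})$. This graph is again K\"onig, carrying the matching $P\setminus\{\{u,v\}\}$, which still has property {\bf(P)} and no bad $4$-cycle (both inherited by an induced subgraph) and still satisfies (2) since neighborhoods only shrink by $\{u,v\}$; so the induction hypothesis applies.

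The main obstacle is the colon term $R/(I\colon x_u)$. Forming $(I\colon x_u)$ kills the variables of the in-neighbors and weight-$1$ out-neighbors of $u$, but a weight-$2$ out-neighbor $z$ contributes the non-squarefree generator $x_z^{2}$ while its out-edges survive, so $(I\colon x_u)$ is in general not the edge ideal of a deletion. My plan is to carry out the case analysis forced by the orientation of the pendant edge ($u\to v$ versus $v\to u$) and by $w(u)\in\{1,2\}$, and in each case to exploit conditions (1) and (2)---in the spirit of Lemmas~\ref{lemma2} and \ref{lemma3} and Remark~\ref{azul}---to control $N_D^{+}(u)$, $N_D^{-}(u)$, and the matched partners of $N_D(u)$. (Note that (2) already forbids, for instance, a vertex of $V^{+}$ whose matched partner is one of its out-neighbors, which rules out the simplest obstructing configurations.) The aim is to identify $R/(I\colon x_u)$, up to free variables and Artinian complete-intersection factors $K[x_z]/(x_z^{2})$, with the ring of a strictly smaller weighted oriented graph that is again K\"onig and satisfies (1) and (2), so that induction applies. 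Verifying that (1) and (2) survive this finer deletion, and that the weight-$2$ out-neighbors genuinely split off as Cohen--Macaulay Artinian factors rather than breaking purity, is the technical heart; once both quotients are Cohen--Macaulay, Proposition~\ref{propo-estrella2}(2) closes the induction.
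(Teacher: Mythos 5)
Your forward implication and the overall architecture of the converse (reduction to weights $\le 2$ via Proposition~\ref{propo-estrella}, induction, locating a degree-$1$ vertex via Lemma~\ref{corolario-grado1}, splitting at its matched partner $u$ with Proposition~\ref{propo-estrella2}(2), and handling $(I,x_u)$ as the ideal of $D\setminus\{u,v\}$) agree with the paper's proof. The genuine gap is exactly at what you call the technical heart, and the resolution you sketch is unworkable: you propose to identify $R/(I\colon x_u)$, up to free variables and \emph{Artinian factors} $K[x_z]/(x_z^2)$ for the weight-$2$ out-neighbors $z$ of $u$, with the ring of a smaller weighted oriented graph. But conditions (1) and (2) do \emph{not} force the out-edges of such $z$ to disappear, so $x_z^2$ stays entangled with surviving edge generators and $x_z$ cannot be split off as a tensor factor. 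Concretely, take the tree $D$ with oriented edges $(x',u),(u,z),(z^*,z),(z,c),(c,c^*)$, weights $w(u)=w(z)=2$ and $1$ elsewhere, and $P=\{\{x',u\},\{z,z^*\},\{c,c^*\}\}$: one checks (1) and (2) hold (so $D$ is Cohen--Macaulay and lies in the scope of the implication you must prove), $x'$ is a degree-$1$ vertex matched to $u$, and
$$
(I(D)\colon x_u)=(x'x_u,\ z^2,\ zc,\ cc^*).
$$
The subideal $(z^2,zc,cc^*)$ is not of your claimed form: it is not $(z^2)$ plus an ideal in $K[c,c^*]$ (since $zc$ lies in it but $c$ does not), so $K[z,c,c^*]/(z^2,zc,cc^*)$ is not $K[z]/(z^2)\otimes_K S$ for any $S$; nor is it the edge ideal of any weighted oriented graph, because $z^2$ is a pure power. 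Your parenthetical observation (that (2) forbids $z\in V^+$ from having its matched partner as an out-neighbor) is correct but does not rule out this configuration, where the escaping out-edge $(z,c)$ goes to a vertex outside $N_D^+(u)\cup\{u\}$.

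The missing idea---and the device the paper uses---is \emph{partial polarization}. One first rewrites the colon ideal as a regular sequence of variables plus $L=(I(\mathcal{H}),\{v^2\mid v\in V''\})$, where $V''$ is the set of weight-$2$ out-neighbors of $u$ and $\mathcal{H}$ is obtained from the deletion by removing all edges \emph{into} $V''$ (those generators are redundant modulo the $v^2$), so the vertices of $V''$ become sources that keep their out-edges. Then each $v^2$ is replaced by $vy_v$ with $y_v$ a new variable; this polarization preserves Cohen--Macaulayness, and the resulting ideal is the edge ideal of a weighted oriented graph $\mathcal{F}$ obtained by attaching a whisker at each $v\in V''$. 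The real work is then showing---using (1) and (2)---that the vertices matched into the deleted set become isolated and drop out, that $\mathcal{F}$ carries a perfect matching $P_i$ (surviving edges of $P$ together with the whisker edges $\{v,y_v\}$) satisfying (1) and (2), hence $\mathcal{F}$ is K\"onig by Theorem~\ref{konig} and Remark~\ref{2star}, so induction applies to $\mathcal{F}$. In my example this turns $(z^2,zc,cc^*)$ into the edge ideal of the path $y_z,z,c,c^*$, which is Cohen--Macaulay. Note also that when $w(u)=2$ the paper must apply Proposition~\ref{propo-estrella2}(2) a \emph{second} time, splitting $J=(I(D)\colon x_u)$ along $x_u$ again (there $x_u$ is a zero-divisor modulo $J$), before the polarization analysis; your plan does not anticipate this either. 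Without the whisker/polarization step, the induction cannot close, so the proposal as it stands has a genuine gap.
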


\begin{proof} 
$\Rightarrow)$ By Proposition \ref{teorema-estrella}, $I(G)$ is
Cohen--Macaulay. So, by Theorem \ref{3-5cycles}, $G$ satisfies (1). As
$I(D)$ is Cohen--Macaulay, $I(D)$ is unmixed. Hence, by
Remark \ref{azul}, 
$D$ satisfies (2).

$\Leftarrow)$ By induction on $|P|$. 
By Theorem \ref{3-5cycles}, $I(G)$ is Cohen--Macaulay.
Consequently, by Lemma~\ref{corolario-grado1}, there is $x^{\prime}\in
V(D)$ such that $\deg_{D}(x^{\prime})=1$. Then, $x^\prime$ is a
source or a sink. Hence $w(x^{\prime})=1$. 
Since $P$ is perfect, there is
$x\in V(D)$ such that $e':=\{x,x^{\prime}\}\in P$.   

One has the equality $(I(D),x)=(I(D_1),x)$, where 
$D_1=D\setminus\{x,x^{\prime}\}$. 
We denote the underlying graph of $D_1$ by $G_1$. 
Thus $Q_{1}:=P\setminus\{e'\}$ is a
 perfect matching of $G_1$ such that, 
$Q_1$ satisfies (1)
and (2) in $D_1$, since $P$ satisfies them in $D$. Then, by Theorem \ref{konig}, $G_1$ is very well-covered, since $Q_1$ satisfies (1). So, by Remark \ref{2star}, $G_1$ is K\"onig. Hence, by
 induction $I(D_1)$ is Cohen--Macaulay. This implies $(I(D),x)$ is
 Cohen--Macaulay because $(I(D),x)=(I(D_1),x)$ and $x$ is regular
 modulo $I(D_1)$.

Furthermore, the variable $x$ is a zero-divisor of $R/I(D)$, since either $xx'$ or $x'x^{w(x)}$ is a minimal generator of $I(D)$. Therefore, by Proposition \ref{propo-estrella2}(2), to prove 
that $I(D)$ is Cohen--Macaulay it suffices to prove that $J:=(I(D)\colon x)$
is Cohen--Macaulay.

For $i=1,2$, we set $V^{i}:=\{z\mid w(z)=i\}$, 
$$V^{\prime}:=N_{D}^{+}(x)\cap V^{1}\ \mbox{ and }\ 
V^{\prime\prime}:=N_{D}^{+}(x)\cap V^{2}.
$$
\quad By Proposition~\ref{propo-estrella}, we may assume 
$V^{+}=V^{2}$ and $V(D)=V^{1}\cup V^{2}$.  We consider the cases
$w(x)=2$ and $w(x)=1$. As is seen below in the first case $x$ is a zero-divisor of
$R/J$ and in the second case $x$ is a regular element of $R/J$.

Case (A): Assume that $w(x)=2$. If $(x,x^{\prime})\in E(D)$, then
$x^{\prime}\in N_{D}^{+}(x)$. So, by (2), one has $N_{D}^{}(x)\subset
N_{D}^{+}(x)$, since $w(x)=2$ and $\{x',x\}\in P$. Thus
$N_{D}^{-}(x)=\emptyset$, i.e., $x$ is a source. A contradiction,
since $x\in V^{+}$. Then, $(x^{\prime},x)\in E(D)$ and $x^{\prime}\in N_{D}^{-}(x)$.  
So, $x^{\prime}x^2$ is a minimal generator of $I(D)$ and
$x^{\prime}x$ is a minimal generator of $J$. This implies, $x\not\in
J$ and $x$ is a zero-divisor of $R/J$ because $x^2\notin I(D)$ and
$x^{\prime}x\in J$. Hence, by Proposition~\ref{propo-estrella2}(2),
we only need to show     
that $(J\colon x)$ and $(J, x)$ are 
Cohen--Macaulay. We can write $J$ as 
\begin{equation}\label{jul25-19}
J=(I(D)\colon x)=(xN_{D}^{-}(x),V^{\prime},\{v^{2}\mid v\in
V^{\prime\prime}\},I(D\setminus A_1)),
\end{equation}
where $A_1=V^{\prime}\cup \{x\}$, then $ x'\notin A_1$, since $x^{\prime}\in N_{D}^{-}(x)$. Using
Eq.~(\ref{jul25-19}), we get the equalities
\begin{eqnarray}\label{Aug28-19}
(J, x)&=&(A_1,
\{v^{2}\mid v\in V^{\prime\prime}\}, I(D\setminus A_1)),
\end{eqnarray} 
\begin{eqnarray}\label{Aug28-19-2}
(J\colon x)&=&(A_2\setminus \{x\},\{v^{2}\mid v\in V^{\prime
\prime}\}, I(D\setminus A_2)),
\end{eqnarray}
where $A_2=N^{-}_{D}(x)\cup V^{\prime}\cup\{x\}$. Note that $x'\in
A_2$. Setting $L_i:=(\{v^{2}\mid v\in
V^{\prime\prime}\}, I(D\setminus A_i))$ for $i=1,2$, from
Eqs.~(\ref{Aug28-19}) and 
(\ref{Aug28-19-2}) we get
$$(J, x)=(A_1,L_1)\ \mbox{ and }\ (J\colon x)=(A_2\setminus\{x\},L_2).
$$
\quad Hence, we only need to prove that $L_1$ and $L_2$ are Cohen--Macaulay, since $A_1$
and $A_2\setminus\{x\}$ are regular sequences of $R/L_1$ and 
$R/L_2$, respectively. To show this, we will consider for $i=1,2$ the
following auxiliary graphs $\mathcal{H}_i$ and $\mathcal{F}_i$. 

For $i=1,2$ consider the weighted oriented graph $\mathcal{H}_i$ with
$V(\mathcal{H}_i):=V(D\setminus A_i)$ and  
$$
E(\mathcal{H}_i):=E(D\setminus A_i)\setminus \{(a_1,a_2)\in E(D)\mid
a_2\in V^{\prime\prime}\}.
$$
\quad Consequently, the elements of $V^{\prime\prime}$ are sources in
$\mathcal{H}_i$ and $I(\mathcal{H}_i)\subset I(D\setminus A_i)$ for
$i=1,2$. Furthermore, if $e\in E(D\setminus A_i)\setminus
E(\mathcal{H}_i)$, then $e=(a_1,a_2)$ with $a_2\in V^{''}$ implying
$x_e:=a_{1}a_{2}^{w(a_2)}=a_{1}a_{2}^{2}\in (\{v^{2}\mid v\in
V^{''}\})$. Hence, $I(D\setminus A_i)\subset
(I(\mathcal{H}_i),\{v^{2}\mid v\in V^{\prime\prime}\})$      
 and we get
$$ 
(I(\mathcal{H}_i),\{v^{2}\mid
v\in V^{\prime\prime}\})\subset ( I(D\setminus A_i),\{v^{2}\mid v\in
V^{\prime\prime}\})\subset (I(\mathcal{H}_i),\{v^{2}\mid
v\in V^{\prime\prime}\}),
$$
since $I(\mathcal{H}_i)\subset I(D\setminus A_i)$. Therefore, 
$L_i=(I(\mathcal{H}_i),\{v^{2}\mid v\in V^{\prime\prime}\})$ for
$i=1,2$. Now, setting         
\begin{center}
$\Omega_i:=\{a\in V(\mathcal{H}_i)\mid \{a,a^{\prime}\}\in P
 \textit{ with } a^{\prime}\in A_{i}\cup V^{''}\}$,
\end{center}
we will prove that  $\Omega_i$ is a set of isolated vertices in $\mathcal{H}_i$ for
 $i=1,2$, that is, $N_{\mathcal{H}_i}(a)=\emptyset$ for
 $a\in\Omega_i$. 
 We take $a\in \Omega_i$,
 then there is $\{a,a^{\prime}\}\in P$ with $a^{\prime}\in A_{i}\cup V^{''}
\subset N_{D}^{}(x)\cup\{x\}$. If $a^{\prime}=x$, then $a=x^{\prime}$. Thus, $a$ is isolated in $\mathcal{H}_i$, since $x\in A_i$ and ${\rm deg}_{D}(x^{\prime})=1$. Consequently, we can assume $a^{\prime}\in N_{D}(x)$. By contradiction, suppose there is $b\in V(\mathcal{H}_i)$ such that
 $b\in N_{\mathcal{H}_i}(a)$. By (1), $b\in N_{D}(x)$, since $a^{\prime}\in N_{D}(x)$. So, if $i=2$, then $b\in V^{''}$, since $b\not\in A_{2}$. Now, if $i=1$, then $a^{\prime}\in (A_1\cup V^{''})\setminus \{x\}\subset N_{D}^{+}(x)$. Hence, by (2), $b\in N_{D}^{+}(x)$. But $b\notin
 A_1$, then $b\in V^{\prime\prime}$. Thus, in both cases $b\in V^{\prime\prime}$, that is, $x\in N^{-}_{D}(b)$  and
 $w(b)=2$. By definition of $E(\mathcal{H}_{i})$, $(a,b)\notin
 E(\mathcal{H}_i)$, since $b\in V^{\prime\prime}$. Then $(b,a)\in E(\mathcal{H}_i)$. This implies, $a\in
 N_{D}^{+}(b)$ and by (2), we have $N_{D}(a^{\prime})\subset N_{D}^{+}(b)$. In
 particular, $x\in N_{D}^{+}(b)$, since $x\in N_{D}(a^{\prime})$. A
 contradiction, since $x\in N^{-}_{D}(b)$. 
Therefore $\Omega_i$ is a
 set of isolated vertices in $\mathcal{H}_i$.                      

Note that $V^{''}\subset V(D)\setminus A_i=V(\mathcal{H}_i)$. We will prove that $V^{\prime\prime}\cap \Omega_i=\emptyset$. By contradiction suppose $a\in V^{''}\cap\Omega_i$, then $a^{\prime}\in A_i\cup V^{''}\subset N_{D}(x)\cup\{x\}$ where $\{a,a^{\prime}\}\in P$. If $a^{\prime}=x$, then $x^{\prime}=a\in V^{''}$ implies $(x,x^{\prime})\in E(D)$. A contradiction, since $(x^{\prime},x)\in E(D)$. Consequently, $a^{\prime}\in N_{D}(x)$, but $a\in V^{''}\subset N_{D}(x)$. This is a contradiction by (1). Therefore $V^{''}\cap \Omega_i=\emptyset$ and $V^{''}\subset V(\mathcal{H}_i\setminus \Omega_i)$. 

If $\{a,a^{\prime}\}\in P$ and $a^{\prime}\in V^{\prime\prime}$, then $a\in \Omega_i$ and $a\not\in V(\mathcal{H}_i\setminus \Omega_i)$. Now, for each $v\in V^{\prime\prime}$ let $y_v$ be a new variable and consider the weighted oriented graph $\mathcal{F}_i$ for $i=1,2$,
 whose vertex set and edge set are
\begin{eqnarray*}
 V(\mathcal{F}_i)&:=&V(\mathcal{H}_i\setminus \Omega_i)\cup \{y_v\mid
 v\in V^{\prime\prime} \},\\
 E(\mathcal{F}_i)&:=&E(\mathcal{H}_i\setminus \Omega_i)\cup
 \{(v,y_v)\mid v\in V^{\prime\prime}\},
\end{eqnarray*}
respectively, and whose weight function
 $w_i$, is given by 
$$
w_i(u):=
\begin{cases}
w(u)&if\ u\in V(\mathcal{H}_i\setminus\Omega_i)\mbox{ and }u\notin
V^{\prime\prime}, \\
1&if\ u\in V^{\prime\prime}\cup\{y_v\vert\, v\in V^{\prime\prime}\}. 
\end{cases}
$$
\quad 
Since $\Omega_i$ is a set of isolated vertices in $\mathcal{H}_i$, we 
have $E(\mathcal{F}_i)=E(\mathcal{H}_i)\cup\{(v,y_v)\mid v\in
V^{\prime\prime}\}$. Also, if $e_1=(a_1,a_2)\in E(\mathcal{H}_i)$,
then $a_2\not\in V^{\prime\prime}$, by definition of
$E(\mathcal{H}_i)$. Thus,
$m_{e_1}:=a_1a_2^{w(a_2)}=a_1a_2^{w_i(a_2)}\in I(\mathcal{F}_i)$.
Similarly if $e_2\in E(\mathcal{F}_i)\setminus \{(v,y_v)\mid v\in
V^{\prime\prime}\}$, then $e_2=(a_1,a_2)$ with $a_2\not\in
V^{\prime\prime}$, implies $a_1a_2^{w_i(a_2)}=a_1a_2^{w(a_2)}\in
I(\mathcal{H}_i)$. Consequently,
$$I(\mathcal{F}_i)=(I(\mathcal{H}_i),\{vy_v\mid v\in
V^{\prime\prime}\}).$$ 
Furthermore, $L_i=(I(\mathcal{H}_i),\{v^2\mid
v\in V^{\prime \prime}\})$. Then $I(\mathcal{F}_i)$ is a partial polarization of $L_i$
 obtained from $L_i$ by polarizing all monomials $v^2$ with $v\in
 V^{\prime\prime}$, that is, we replace $v^2$ by $vy_v$. 
 Hence, we only need to prove that $I(\mathcal{F}_{i})$ is Cohen--Macaulay for $i=1,2$ (cf. \cite[p.~555]{reyes-vila}). 
  
Let $F_i$ be the underlying graph of $\mathcal{F}_i$ for $i=1,2$. We
will prove that  
\begin{center}
$P_{i}:=(P\setminus (\{e\in P \mid e\cap \Omega_i\neq \emptyset\}\cup \{e^{\prime}\}))\cup
\{\{v,y_v\}\mid v\in V^{\prime \prime}\}$
\end{center} 
is a perfect matching in $F_i$, where $e^{\prime}=\{x,x^{\prime}\}$.
We suppose $\tilde{e}\in P\setminus (\{e\in P \mid e\cap \Omega_i\neq
\emptyset\}\cup \{e^{\prime}\})$ such that $\tilde{e}\cap V^{''}\neq
\emptyset$. Thus, $\tilde{e}=\{a,a^{\prime}\}$ with $a^{\prime}\in
V^{''}$. Consequently, $a\in\Omega_i$, so $\tilde{e}\cap\Omega_i\neq
\emptyset$. A contradiction, then $P_i$ is a matching, since $P$ is a
matching and $\{y_v\mid v\in V^{''}\}\cap V(D)=\emptyset$. So, to
show that $P_i$ is a perfect matching of $F_i$, we need only show 
that the following equality holds         
$$
V(F_i):=(V(D)\setminus(A_i\cup\Omega_i))\cup \{y_v\mid
 v\in V^{\prime\prime} \}=\bigcup_{e\in P_i}e.
$$

We take $e=\{a,a^{\prime}\}\in P_i$. If $e=\{v,y_v\}$ with $v\in
V^{''}$, then $e\subset V(\mathcal{F}_i)$, since $V^{''}\subset
V(\mathcal{F}_i)$. Now, we assume $e\in P$, then
$e\cap\Omega_i=\emptyset$ and $e\neq e^{\prime}$. Thus,
$x\not\in\{a,a^{\prime}\}$ and by (1) we have $|\{a,a^{\prime}\}\cap
A_i|\leq 1$, since $A_i\setminus\{x\}\subset N_{D}(x)$. We can
assume $a\not\in A_i$, then $a\in V(\mathcal{H}_i)$. But $a\not\in
\Omega_i$, then $a^{\prime}\not\in A_i\cup V^{''}$. So, $e\cap
A_i=\emptyset$. Hence, $e\subset V(\mathcal{F}_i)$, since
$e\cap\Omega_i=\emptyset$. Now, to show the inclusion $``\subset"$,
we take $b\in V(\mathcal{F}_i)$. Then, $b\neq x$, since $x\in A_i$.
If $b=x^{\prime}$, then $x^{\prime}\in V(\mathcal{H}_i)$.
Consequently, $x^{\prime}\in \Omega_i$, since $x\in A_i$. A
contradiction, since $b\not\in \Omega_i$. This implies $b\neq
x^{\prime}$. If $b\in \{y_v,v\}$ for some $v\in V^{''}$, then $b\in
\bigcup_{e\in P_i}e$. Now, we can assume 
$$b\not\in V^{''}\cup
\{y_v\mid v\in V^{\prime\prime} \}\cup \{x,x^{\prime}\}\ \mbox{ and }\ b\in
V(D)\setminus (A_i\cup \Omega_i).$$
Thus, there is
$\tilde{e}=\{b,b^{\prime}\}\in P$ such that $\tilde{e}\neq
e^{\prime}$, since $P$ is a perfect matching of $D$. Hence,
$b^{\prime}\not\in \Omega_i$, since $b\not\in V^{''}\cup A_i$.
Consequently $\tilde{e}\cap\Omega_i=\emptyset$, since $b\not\in
\Omega_i$. Therefore, $\tilde{e}\in P_i$ and $b\in \bigcup_{e\in
P_i}e$, since $\tilde{e}\neq e^{\prime}$.                       

Next we show that $P_i$ satisfies (1) and (2) in $\mathcal{F}_i$. Assume that $a\in V(\mathcal{F}_i)$, $\{a,b'\}\in E(F_i)$ and $\{b,b^{\prime}\}\in P_{i}$. We will prove $N_{\mathcal{F}_i}(b)\subset N_{\mathcal{F}_i}^{}(a)$; furthermore if $a\in V^{+}(\mathcal{F}_i)$ and $(a,b')\in E(\mathcal{F}_i)$, then we will show  $N_{\mathcal{F}_i}(b)\subset N_{\mathcal{F}_i}^{+}(a)$. If $b=y_v$, then $b^{\prime}=v$ and
$$N_{\mathcal{F}_i}(b)=N_{\mathcal{F}_i}^{}(y_v)=\{b^{\prime}\}\subset N_{\mathcal{F}_i}^{}(a).$$
Also, if $(a,b^{\prime})\in E(\mathcal{F}_i)$, then
$N_{\mathcal{F}_i}^{}(b)=\{b^{\prime}\}\subset
N_{\mathcal{F}_i}^{+}(a)$. Now, if $b\in V^{\prime\prime}$, then
$b'=y_b$. Consequently, $a=b$, $N_{\mathcal{F}_i}(b)\subset
N_{\mathcal{F}_i}^{}(a)$ and $w_{i}(a)=1$ (i.e. $a\not\in
V^{+}(\mathcal{F}_i)$), since ${\rm deg}_{\mathcal{F}_i}(y_b)=1$ and
$a=b\in V^{\prime\prime}$. Thus, we may assume $b\in V(D)\setminus
V^{\prime\prime}$, then $b^{\prime}\in V(D\setminus A_i)$, since
$P_i$ is a perfect matching of $F_i$. If $a=y_v$, then $b^{\prime}=v$
and $b=y_v$. A contradiction, since $b\in V(D)$. Consequently, $a\in
V(D)$. Furthermore, $N_{\mathcal{F}_i}(b)\subset
V(\mathcal{H}_i\setminus \Omega_i)$ since $b\not\in
V^{\prime\prime}$. As $P$ satisfies (1) and by definition of
$E(\mathcal{H}_i)$, we have             
$$
N_{\mathcal{F}_i}(b)\subset N_{D}(b)\cap V(\mathcal{H}_i\setminus \Omega_i)\subset N_{D}^{}(a)\cap V(\mathcal{H}_i\setminus \Omega_i)\subset N_{\mathcal{F}_i}^{}(a)\cup \{a_2\in V^{\prime\prime}\mid (a,a_2)\in E(D)\}.
$$

Suppose, $c\in N_{\mathcal{F}_i}(b)\cap \{a_2\in V^{\prime\prime}\mid
(a,a_2)\in E(D)\}$, then $(a,c)\in E(D)$ and $\{c,b\}\in E(F_i)$. But
$V^{\prime\prime}$ is a set of source vertices in $\mathcal{H}_i$ and
$c\in V^{\prime\prime}$, then $(c,b)\in E(\mathcal{F}_i)\subset
E(D)$. Consequently $w(c)>1$ and $b\in N_{D}^{+}(c)$. This implies
$N_{D}(b^{\prime})\subset N_{D}^{+}(c)$, since $\{b,b^{\prime}\}\in
P$ and $D$ satisfies (2). Hence, $a\in N_{D}(b^{\prime})\subset
N_{D}^{+}(c)$. A contradiction, since $(a,c)\in E(D)$. Therefore
$$N_{\mathcal{F}_i}(b)\cap \{a_2\in V^{\prime\prime}\mid (a,a_2)\in
E(D)\}=\emptyset$$ 
and $N_{\mathcal{F}_i}(b)\subset
N_{\mathcal{F}_i}(a)$. Now, if $a\in V^{+}(\mathcal{F}_i)$ and
$(a,b^{\prime})\in E(\mathcal{F}_i)$, then $a\in V^{+}(D)$ and
$(a,b^{\prime})\in E(D)$. Thus $N_{D}(b)\subset N_{D}^{+}(a)$, since
$P$ satisfies (2). Hence             
$$N_{\mathcal{F}_i}(b)\subset
N_{D}(b)\cap V(\mathcal{H}_i\setminus \Omega_i)\subset
N_{D}^{+}(a)\cap V(\mathcal{H}_i\setminus \Omega_i)\subset
N_{\mathcal{F}_i}^{+}(a)\cup \{a_2\in V^{\prime\prime}\mid (a,a_2)\in E(D)\}.
$$

But $N_{\mathcal{F}_i}(b)\cap \{a_2\in V^{\prime\prime}\mid (a,a_2)\in E(D)\}=\emptyset$, then $N_{\mathcal{F}_i}(b)\subset N_{\mathcal{F}_i}^{+}(a)$. Furthermore, $F_i$ has no $4$-cycles with two edges in $P_i$, since $D$ has no $4$-cycle with two edges in $P$ and ${\rm deg}_{\mathcal{F}_i}(y_v)=1$ for each $v\in V^{\prime\prime}$. This implies, $P_i$ satisfies (1) and (2) in $\mathcal{F}_i$. 

By Theorem \ref{konig}, $F_i$ is very well-covered, since $P_i$ satisfies (1). Then, by Remark \ref{2star}, $F_i$ is K\"onig. Therefore, by induction hypothesis, $I(\mathcal{F}_i)$ is Cohen--Macaulay for $i=1,2$.

Case (B): Assume that $w(x)=1$. In this case we can
write $J$ as
$$J=(I(D)\colon   
 x)=(B_{1},\{b^2\mid b\in V^{\prime\prime}\}, I(D\setminus B_{1})),$$
 where $B_{1}=V^{\prime}\cup N_{D}^{-}(x)$. Consequently, $x$ is regular on $R/J$
because $x$ is not in any minimal monomial generator of $J$. Then,
$R/J$ is Cohen--Macaulay if and only if $R/(J,x)$ is Cohen--Macaulay.
Thus, it suffices to show that $(J,x)$ is Cohen--Macaulay.

Furthermore, $(J,x)=(B, \{v^{2}\mid v\in V^{\prime\prime}\}, I(D\setminus B))$, 
where $B=B_1\cup \{x\}$. Thus, to prove that $(J,x)$ is 
Cohen--Macaulay, it is only necessary to prove that the ideal 
$$L:=(\{v^{2}\mid v\in V^{\prime\prime}\}, I(D\setminus B))$$ 
is Cohen--Macaulay, since
 $B$ is a regular sequence in $R/L$. 
But $L=L_2$ in Case (A), then with the same arguments on $L_2$, 
it follows that $L$ is Cohen--Macaulay.       
 \end{proof}

\begin{corollary}\label{jun23-19}
Let $D$ be a weighted oriented graph, where $G$ is a K\"onig graph without $4$-cycles. Hence, $I(D)$ is unmixed if and only if $I(D)$ is Cohen--Macaulay.
\end{corollary}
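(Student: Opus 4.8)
The plan is to read off the result by comparing the two combinatorial criteria established earlier in the paper: the unmixedness characterization of Theorem~\ref{prop-unmixed} and the Cohen--Macaulayness characterization of Theorem~\ref{theoremCM}. Both apply verbatim here because $G$ is K\"onig. The key observation is that these two criteria have \emph{identical} condition (2), while their conditions (1) differ only by the extra clause ``$G$ has no $4$-cycles with two edges in $P$.'' Under the standing hypothesis that $G$ has no $4$-cycles at all, this extra clause is vacuous, so the two criteria coincide.

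One direction needs no cycle hypothesis at all: since $I(D)$ is a graded ideal, if it is Cohen--Macaulay then it is unmixed (see \cite{Mats}). So I only need to establish the forward implication.

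For the converse, I would assume $I(D)$ is unmixed. Since $G$ is K\"onig, Theorem~\ref{prop-unmixed} produces a perfect matching $P$ of $G$ with property {\bf(P)} satisfying condition (2) of that theorem. Because $G$ has no $4$-cycles whatsoever, it has in particular no $4$-cycle with two edges in $P$; hence this very same $P$ witnesses condition (1) of Theorem~\ref{theoremCM}, and condition (2) of Theorem~\ref{theoremCM} is the same condition (2) I already have. Applying Theorem~\ref{theoremCM} (again using that $G$ is K\"onig) then yields that $I(D)$ is Cohen--Macaulay, completing the equivalence.

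I do not expect a genuine obstacle: the substance of the corollary is entirely contained in the two main theorems, and the corollary is just the remark that the no-$4$-cycle hypothesis makes the Cohen--Macaulay criterion reduce to the unmixed criterion. The one point requiring a moment's care is that the \emph{same} perfect matching $P$ may be used in both applications; this is automatic here, since I extract $P$ from the unmixed characterization and feed it directly into the Cohen--Macaulay characterization, rather than invoking the existence of two possibly different matchings.
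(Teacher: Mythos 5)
Your proposal is correct and is essentially the paper's own proof: the paper simply cites Theorems~\ref{prop-unmixed} and~\ref{theoremCM}, and your argument is the explicit unpacking of that citation (identical condition (2), and the extra ``no $4$-cycles with two edges in $P$'' clause in condition (1) being vacuous when $G$ has no $4$-cycles at all). Your care that the same matching $P$ be fed from one theorem into the other is exactly the right point, and nothing further is needed.
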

\begin{proof}
It follows from Theorems \ref{prop-unmixed} and \ref{theoremCM}. 
\end{proof}

The next result shows that Theorem~\ref{theoremCM} also holds when
$G$ has no $3$- or $5$-cycles.

\begin{proposition}\label{Coro-45}
Let $D$ be a weighted oriented graph without $3$- and $5$-cycles, 
then $I(D)$ is Cohen--Macaulay if and only if $D$ satisfies the following two conditions:
\begin{enumerate}[{\rm (1)}] 
\item $G$ has a perfect matching $P$ with property {\bf(P)} and $G$ has no $4$-cycles 
with two edges in $P$.
\item If $a\in V(D)$, $w(a)> 1$, $b^{\prime}\in N_{D}^{+}(a)$ and $\{b,b^{\prime}\}\in P$, then $N_{D}(b)\subset N_{D}^{+}(a)$.
 \end{enumerate}
\end{proposition}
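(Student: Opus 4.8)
The plan is to reduce Proposition~\ref{Coro-45} to the already-proved Theorem~\ref{theoremCM} by establishing that the hypothesis ``$G$ has no $3$- and $5$-cycles'' together with condition (1) forces $G$ to be K\"onig, after which Theorem~\ref{theoremCM} applies verbatim. This mirrors exactly the strategy used in Proposition~\ref{Coro-37} for the unmixed case, where the passage from the ``no $3$-, $5$-, $7$-cycle'' hypothesis to the K\"onig hypothesis was handled through very well-coveredness. The key observation is that conditions (1) and (2) here are \emph{identical} to those of Theorem~\ref{theoremCM}; the only difference is the standing hypothesis on cycles, so the entire content of the proof is the bridge between the two cycle hypotheses.

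First I would prove the forward implication. Assuming $I(D)$ is Cohen--Macaulay, Proposition~\ref{teorema-estrella} gives that $I(G)$ is Cohen--Macaulay. Since $G$ has no $3$- or $5$-cycles, Theorem~\ref{3-5cycles} applies and yields that each non-isolated connected component of $G$ has a perfect matching with property {\bf(P)} and no $4$-cycle with two edges in the matching; collecting these over the components produces a global perfect matching $P$ of $G$ satisfying condition (1). For condition (2), I would note that $I(D)$ Cohen--Macaulay implies $I(D)$ unmixed (by \cite{Mats}), and then invoke Remark~\ref{azul}: since $P$ is a perfect matching with property {\bf(P)} and $I(D)$ is unmixed, $P$ satisfies condition (2). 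This direction is essentially formal.

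For the reverse implication, I would first use condition (1) together with Theorem~\ref{konig} to conclude that each non-isolated component of $G$ is very well-covered, hence by Remark~\ref{2star} each such component is K\"onig; since a disjoint union of K\"onig graphs is K\"onig, $G$ itself is K\"onig. Now $D$ satisfies the hypotheses of Theorem~\ref{theoremCM}, namely $G$ is K\"onig and $D$ satisfies conditions (1) and (2) of that theorem (which coincide with the present (1) and (2)). Therefore Theorem~\ref{theoremCM} gives directly that $I(D)$ is Cohen--Macaulay.

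The step I expect to require the most care is verifying that ``no $3$- and $5$-cycles plus condition (1)'' genuinely lands us inside the K\"onig hypothesis of Theorem~\ref{theoremCM}, because the Cohen--Macaulay property and property {\bf(P)} are stated per connected component while the K\"onig property and Theorem~\ref{theoremCM} are stated for the whole graph. The routine but essential point is that all the relevant properties---Cohen--Macaulayness of $I(G)$, very well-coveredness, the K\"onig property, and the existence of a perfect matching with property {\bf(P)}---behave well under taking disjoint unions, so one may freely pass between the component-wise statements of Theorem~\ref{3-5cycles} and the global statement of Theorem~\ref{theoremCM}. Once this bookkeeping is in place, no new combinatorial argument is needed and the proposition follows immediately from the two cited theorems.
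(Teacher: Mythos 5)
Your proposal is correct and follows essentially the same route as the paper: both directions reduce to Theorem~\ref{theoremCM} by passing through very well-coveredness (Theorem~\ref{konig}) and Remark~\ref{2star} to obtain the K\"onig hypothesis, with the forward direction resting on Proposition~\ref{teorema-estrella} and Theorem~\ref{3-5cycles}. The only cosmetic difference is that you obtain condition (2) by invoking Remark~\ref{azul} directly rather than citing the forward implication of Theorem~\ref{theoremCM} (which internally does exactly that), so no substantive divergence.
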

\begin{proof}
$\Rightarrow)$ By Proposition \ref{teorema-estrella}, $I(G)$ is
Cohen--Macaulay. Thus, by (b) in Theorem \ref{3-5cycles}, $G$ is very
well-covered. Hence, by Remark \ref{2star}, $G$ is K\"{o}nig.
Therefore, by Theorem \ref{theoremCM}, $D$ satisfies (1) and (2).   

$\Leftarrow)$ By Theorem \ref{konig}, $G$ is very well-covered, since
$G$ satisfies (1). Consequently, by Remark \ref{2star}, $G$ is
K\"{o}nig. Therefore $D$ is Cohen--Macaulay, by Theorem \ref{theoremCM}.   
\end{proof}

The following result proves \cite[Conjecture~53]{WOG},  
when $G$ is a K\"onig graph or $G$ is a graph without $3$- and $5$-cycles. 

\begin{corollary}\label{conjecture-WOG}
Let $D$ be a weighted oriented graph whose underlying graph $G$ is
K\"onig or $G$ has no $3$- or $5$-cycles. Then $I(D)$ is
Cohen--Macaulay if and only if $I(D)$ is unmixed and $I(G)$ is
Cohen--Macaulay.    
\end{corollary}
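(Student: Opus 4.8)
The plan is to prove Corollary~\ref{conjecture-WOG} by combining the two main theorems of the paper with the characterization of Cohen--Macaulayness for the underlying graph $G$ given in Theorem~\ref{3-5cycles}. The forward implication is immediate and already recorded in the preliminaries: if $I(D)$ is Cohen--Macaulay, then $I(D)$ is unmixed (since any graded Cohen--Macaulay ideal is unmixed, see \cite{Mats}) and $I(G)$ is Cohen--Macaulay by Proposition~\ref{teorema-estrella}. So I would dispatch that direction in one line and concentrate on the converse.

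For the converse, I would split into the two stated hypotheses on $G$. Suppose first that $G$ is K\"onig, and assume $I(D)$ is unmixed and $I(G)$ is Cohen--Macaulay. From unmixedness of $I(D)$ and Theorem~\ref{prop-unmixed}, I extract that $G$ has a perfect matching $P$ with property~\textbf{(P)} (condition (1) of Theorem~\ref{prop-unmixed}) together with the out-neighborhood condition (condition (2)). To upgrade this to the Cohen--Macaulay criterion of Theorem~\ref{theoremCM}, the only missing ingredient is the absence of $4$-cycles with two edges in $P$. This is exactly where I would invoke Cohen--Macaulayness of $I(G)$: by Theorem~\ref{3-5cycles} (applied in the K\"onig case), each non-isolated connected component of $G$ has a perfect matching with property~\textbf{(P)} and no $4$-cycle with two edges in that matching. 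The delicate point is that Theorem~\ref{3-5cycles} produces \emph{some} perfect matching with these properties, whereas Theorem~\ref{theoremCM} needs conditions (1) and (2) to hold for \emph{the same} matching $P$ coming from unmixedness; I would reconcile this using Theorem~\ref{konig}(c), which guarantees that \emph{every} perfect matching of a very well-covered graph satisfies~\textbf{(P)}, and Remark~\ref{azul}, which says that once $I(D)$ is unmixed, any perfect matching with property~\textbf{(P)} automatically satisfies condition (2). Thus the matching $P$ from unmixedness already has no bad $4$-cycles (because that is a property of $G$ and $P$, independent of orientation, supplied by Theorem~\ref{3-5cycles} via the very-well-covered structure), and conditions (1) and (2) of Theorem~\ref{theoremCM} hold for $P$, so $I(D)$ is Cohen--Macaulay.

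The case where $G$ has no $3$- or $5$-cycles is entirely parallel, but now I would route through Proposition~\ref{Coro-45} instead of Theorem~\ref{theoremCM}, and through Proposition~\ref{Coro-37} instead of Theorem~\ref{prop-unmixed}. Unmixedness of $I(D)$ gives, via Proposition~\ref{Coro-37}, a perfect matching $P$ with property~\textbf{(P)} satisfying the out-neighborhood condition; Cohen--Macaulayness of $I(G)$ gives, via Theorem~\ref{3-5cycles}, that $G$ is very well-covered with no $4$-cycle having two edges in a perfect matching. Reconciling the two matchings again uses Theorem~\ref{konig}(c) and Remark~\ref{azul}, after which Proposition~\ref{Coro-45} yields that $I(D)$ is Cohen--Macaulay.

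I expect the main obstacle to be precisely the matching-reconciliation step just described: making rigorous that the ``no bad $4$-cycle'' property transfers to the specific matching $P$ arising from the unmixedness hypothesis. The cleanest way to handle this is to observe that, under either hypothesis on $G$, unmixedness of $I(D)$ already forces $G$ to be very well-covered (through Remark~\ref{1star} and Theorem~\ref{lemma-Ivan}, or through Theorem~\ref{konig}), so that \emph{every} perfect matching of $G$ has property~\textbf{(P)}; then the $4$-cycle condition is a statement purely about $G$ with a fixed such $P$, and Theorem~\ref{3-5cycles} combined with Theorem~\ref{konig}(c) shows it holds for all perfect matchings of a very well-covered graph that is Cohen--Macaulay. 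Everything else is bookkeeping that chains the cited equivalences, so the proof should be short once this transfer is phrased correctly.
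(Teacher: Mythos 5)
Your forward direction is exactly the paper's (Proposition~\ref{teorema-estrella} plus the fact that graded Cohen--Macaulay ideals are unmixed), and you correctly identified the crux of the converse: two different perfect matchings are in play, one coming from unmixedness of $I(D)$ and one from Cohen--Macaulayness of $I(G)$. But your resolution of this crux has a genuine gap, because you run the transfer in the wrong direction. You claim that the matching $P$ produced by unmixedness ``already has no bad $4$-cycles,'' justified by saying that Theorem~\ref{3-5cycles} combined with Theorem~\ref{konig}(c) shows the $4$-cycle condition ``holds for all perfect matchings of a very well-covered graph that is Cohen--Macaulay.'' Neither result says this: Theorem~\ref{3-5cycles} is an existence statement (it supplies \emph{some} perfect matching with property \textbf{(P)} and no $4$-cycle containing two of its edges), and Theorem~\ref{konig}(c) makes only property \textbf{(P)}, not the $4$-cycle condition, independent of the choice of matching. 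The assertion you need is in fact true, but it amounts to a nontrivial extra lemma (essentially that a very well-covered graph having a perfect matching $Q$ with no bad $4$-cycles admits no other perfect matching; one proves it by taking an alternating cycle in $P\,\triangle\,Q$ and using property \textbf{(P)} to manufacture chords that force a forbidden $4$-cycle), and you neither prove it nor cite anything that does.

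The fix is to reconcile in the opposite direction, which is exactly what the paper does and which is immediate from tools you already quoted. Ignore the matching coming from unmixedness. Since $I(G)$ is Cohen--Macaulay, Theorem~\ref{3-5cycles} (which covers both the K\"onig case and the no $3$-, $5$-cycle case, so no case split is needed) gives a perfect matching $Q$ satisfying condition (1) of Theorem~\ref{theoremCM}: property \textbf{(P)} and no $4$-cycle with two edges in $Q$. By Theorem~\ref{konig}, $G$ is then very well-covered, hence K\"onig by Remark~\ref{2star}. Now apply Remark~\ref{azul} to \emph{this} matching $Q$: the remark holds for \emph{any} perfect matching with property \textbf{(P)} once $I(D)$ is unmixed, so $Q$ also satisfies condition (2) of Theorem~\ref{theoremCM}. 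Both conditions of Theorem~\ref{theoremCM} now hold for the single matching $Q$, and that theorem yields that $I(D)$ is Cohen--Macaulay. In short: instead of moving the $4$-cycle condition onto the unmixedness matching (which needs an unproven uniqueness statement), move condition (2) onto the Cohen--Macaulay matching (which Remark~\ref{azul} does for free).
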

\begin{proof}
$\Rightarrow)$ It follows from Proposition \ref{teorema-estrella} and Theorem \ref{theorem42}.

$\Leftarrow)$ By Theorem \ref{3-5cycles}, $G$ satisfies (1)
of Theorem \ref{theoremCM}. Hence, by Theorem \ref{konig}, $G$ is
very well-covered. Thus, by Remark~\ref{2star}, $G$ is K\"onig. 
Consequently, by Remark \ref{azul}, $D$ satisfies (2) of Theorem
\ref{theoremCM}, since $I(D)$ is unmixed. Therefore, by Theorem \ref{theoremCM}, $I(D)$ is Cohen--Macaulay.   
\end{proof}

The girth of a graph $G$ is the length of a shortest cycle contained in $G$. If $G$ does not contain any cycles, its girth is
defined to be infinity. 

\begin{corollary}\label{jun25-19}
Let $D$ be a weighted oriented graph such that $G$ has girth greater than $7$. Hence, $I(D)$ is unmixed if and only if $I(D)$ is Cohen--Macaulay.
\end{corollary}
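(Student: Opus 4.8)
The plan is to deduce the equivalence directly from the two combinatorial characterizations already established: Proposition~\ref{Coro-37} for the unmixed property and Proposition~\ref{Coro-45} for the Cohen--Macaulay property. The point is that the hypothesis that $G$ has girth greater than $7$ is strong enough to activate \emph{both} characterizations at once, and—more importantly—to collapse the sole discrepancy between their hypotheses.

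First I would record the combinatorial consequences of the girth bound. A graph whose girth exceeds $7$ has no cycle of length at most $7$; in particular $G$ has no $3$-, $5$-, or $7$-cycles, so Proposition~\ref{Coro-37} is applicable, and $G$ has no $3$- or $5$-cycles, so Proposition~\ref{Coro-45} is applicable. Crucially, $G$ also has no $4$-cycles whatsoever. The implication that $I(D)$ Cohen--Macaulay forces $I(D)$ unmixed is the easy direction and requires no hypothesis on $G$: it holds for any graded ideal and is already recorded in the introduction (see \cite{Mats}).

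For the converse, I would assume $I(D)$ is unmixed. Since $G$ has no $3$-, $5$-, or $7$-cycles, Proposition~\ref{Coro-37} furnishes a perfect matching $P$ of $G$ with property {\bf(P)} satisfying condition~(2) on weights and out-neighborhoods. I would then verify that $D$ meets the hypotheses of Proposition~\ref{Coro-45}. Condition~(2) there is verbatim the same statement; condition~(1) of Proposition~\ref{Coro-45} asks, in addition to property {\bf(P)}, that $G$ have no $4$-cycles with two edges in $P$. This is exactly where the girth hypothesis is used: because $G$ has no $4$-cycles at all, the extra requirement holds vacuously. Invoking Proposition~\ref{Coro-45}, which is legitimate as $G$ has no $3$- or $5$-cycles, then yields that $I(D)$ is Cohen--Macaulay.

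Once the characterizations are in hand the argument is pure bookkeeping, so I do not expect any analytic difficulty. The one step that genuinely requires attention—and the only place the precise value of the girth bound is used—is reconciling the hypotheses of the two propositions: Proposition~\ref{Coro-37} produces a matching with property {\bf(P)} but is silent about $4$-cycles, whereas Proposition~\ref{Coro-45} additionally forbids $4$-cycles through two matching edges. Confirming that girth greater than $7$ eliminates every $4$-cycle, and hence closes this gap, is the crux of the proof.
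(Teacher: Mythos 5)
Your proposal is correct and is exactly the paper's argument: the paper's proof of this corollary is a one-line citation of Propositions~\ref{Coro-37} and \ref{Coro-45}, and your write-up simply supplies the bookkeeping the paper leaves implicit (girth $>7$ makes both propositions applicable, and the absence of $4$-cycles makes the extra condition in Proposition~\ref{Coro-45} vacuous, so the two characterizations coincide).
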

\begin{proof}
It follows from Propositions~\ref{Coro-37} and \ref{Coro-45}.
\end{proof}

\section{Examples}

\begin{example}
The two weighted oriented graphs depicted in Figure~\ref{Ejemplo2}
are mixed, and their underlying graphs are  unmixed K\"onig graphs
with a perfect matching. 
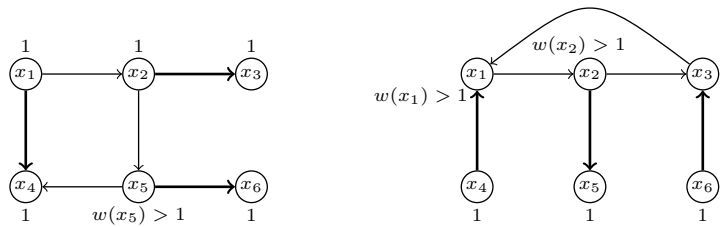
\begin{figure}[H]
\centering
\begin{tikzpicture}[line width=.5pt,scale=0.75]
		\tikzstyle{every node}=[inner sep=1pt, minimum width=5.5pt] 
\tiny{
\node[draw, circle] (2) at (-10,0){$x_{2}$};
\node[draw, circle] (5) at (-10,-2) {$x_{5}$};
\node[draw, circle] (3) at (-8,0) {$x_{3}$};
\node[draw, circle] (6) at (-8,-2){$x_{6}$};
\node[draw, circle] (1) at (-12,0){$x_{1}$};
\node[draw, circle] (4) at (-12,-2) {$x_{4}$};
\node at (-10,0.5){$1$};
\node at (-8,0.5) {$1$};
\node at (-8,-2.5){$1$};
\node at (-12,0.5){$1$};
\node at (-12,-2.5) {$1$};
\node at (-10,-2.5) {\tiny{$w(x_{5})>1$}};
\draw[->] (2) to (5);
\draw[->] (5) to (4);
\draw[->,line width=1pt] (1) -- (4);
\draw[->] (1) -- (2);
\draw[->,line width=1pt] (2) -- (3);
\draw[->,line width=1pt] (5) -- (6);
}
\tiny{
\node[draw, circle] (2) at (-2,0){$x_{2}$};
\node[draw, circle] (5) at (-2,-2) {$x_{5}$};
\node[draw, circle] (3) at (0,0) {$x_{3}$};
\node[draw, circle] (6) at (0,-2){$x_{6}$};
\node[draw, circle] (1) at (-4,0){$x_{1}$};
\node[draw, circle] (4) at (-4,-2) {$x_{4}$};
\node at (-2,-2.5) {\tiny{$1$}};
\node at (-4,-2.5) {\tiny{$1$}};
\node at (-5,-0.4) {\tiny{$w(x_{1})>1$}};
\node at (-2.2,0.5) {\tiny{$w(x_{2})>1$}};
\node at (.45,0) {\tiny{$1$}};
\node at (0,-2.5) {\tiny{$1$}};

\draw[->] (1) to (2);
\draw[->] (2) to (3);
\draw[->,line width=1pt] (4) to (1);
\draw[->,line width=1pt] (2) to (5);
\draw[->,line width=1pt] (6) -- (3);
\draw[->] (3) .. controls (-2,1.5) .. (1);

}
\end{tikzpicture}
\caption{$G$ is an unmixed graph and $I(D)$ is
mixed}\label{Ejemplo2}
\end{figure}
\end{example}

\begin{example}
The two weighted oriented graphs represented in Figure~\ref{Ejemplo3} are unmixed
and not Cohen--Macaulay, and their underlying graphs are
K\"onig and have a perfect matching. 
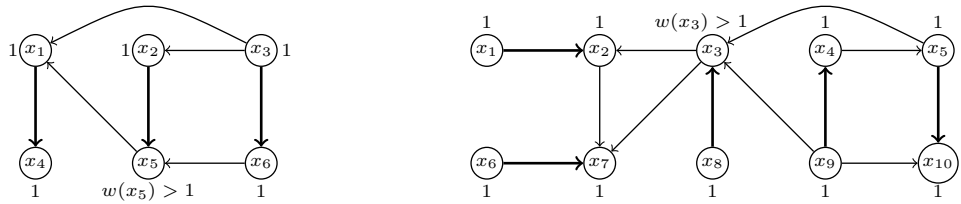
\begin{figure}[h]
\begin{center}
\begin{tikzpicture}[line width=.5pt,scale=0.75]
		\tikzstyle{every node}=[inner sep=1pt, minimum width=5.5pt] 
\tiny{
\node[draw, circle] (2) at (-2,0){$x_{2}$};
\node[draw, circle] (5) at (-2,-2) {$x_{5}$};
\node[draw, circle] (3) at (0,0) {$x_{3}$};
\node[draw, circle] (6) at (0,-2){$x_{6}$};
\node[draw, circle] (1) at (-4,0){$x_{1}$};
\node[draw, circle] (4) at (-4,-2) {$x_{4}$};
\node at (-2,-2.5) {\tiny{$w(x_{5})>1$}};
\node at (-4,-2.5) {\tiny{$1$}};
\node at (-4.4,0) {\tiny{$1$}};
\node at (-2.4,0) {\tiny{$1$}};
\node at (.45,0) {\tiny{$1$}};
\node at (0,-2.5) {\tiny{$1$}};

\draw[->,line width=1pt] (1) to (4);
\draw[->] (5) to (1);
\draw[->,line width=1pt] (2) to (5);
\draw[->] (3) to (2);
\draw[->,line width=1pt] (3) -- (6);
\draw[->] (6) -- (5);
\draw[->] (3) .. controls (-2,1) .. (1);

\node[draw, circle] (1) at (4,0){$x_{1}$};
\node[draw, circle] (6) at (4,-2) {$x_6$};
\node[draw, circle] (2) at (6,0) {$x_{2}$};
\node[draw, circle] (7) at (6,-2){$x_{7}$};
\node[draw, circle] (3) at (8,0){$x_{3}$};
\node[draw, circle] (8) at (8,-2) {$x_{8}$};
\node[draw, circle] (4) at (10,0) {$x_{4}$};
\node[draw, circle] (9) at (10,-2){$x_{9}$};
\node[draw, circle] (5) at (12,0){$x_{5}$};
\node[draw, circle] (10) at (12,-2) {$x_{10}$};
\node at (4,-2.5) {\tiny{$1$}};
\node at (6,-2.5) {\tiny{$1$}};
\node at (8,-2.5) {\tiny{$1$}};
\node at (10,-2.5) {\tiny{$1$}};
\node at (12,-2.5) {\tiny{$1$}};
\node at (4,.5) {\tiny{$1$}};
\node at (6,.5) {\tiny{$1$}};
\node at (7.8,.5) {\tiny{$w(x_{3})> 1$}};
\node at (10,.5) {\tiny{$1$}};
\node at (12,.5) {\tiny{$1$}};
\draw[->,line width=1pt] (1) to (2);
\draw[->] (3) to (2);
\draw[->,line width=1pt] (6) to (7);
\draw[->] (2) to (7);
\draw[->] (3) to (7);
\draw[->,line width=1pt] (8) to (3);
\draw[->] (9) to (3);
\draw[->,line width=1pt] (9) to (4);
\draw[->] (9) to (10);
\draw[->,line width=1pt] (5) to (10);
\draw[->] (4) to (5);
\draw[->] (5) .. controls (10,1) .. (3);
}
\end{tikzpicture}
\caption{$I(D)$ is unmixed and is not Cohen--Macaulay.}\label{Ejemplo3}
\end{center}
\end{figure}
\end{example}

\begin{example} The weighted oriented graph $D$ of
Figure~\ref{Ejemplo1} is Cohen--Macaulay, has an underlying graph $G$ which is K\"onig and
has a perfect matching. 
\begin{figure}[h]
\begin{center}
\begin{tikzpicture}[line width=.5pt,scale=0.75,rotate=90]
		\tikzstyle{every node}=[inner sep=1pt, minimum width=5.5pt] 
\tiny{
\node[draw, circle] (1) at (-2,-3){$x_{1}$};
\node[draw, circle] (2) at (0,-3) {$x_{2}$};
\node[draw, circle] (5) at (-2,-5) {$x_{5}$};
\node[draw, circle] (3) at (0,-5){$x_{3}$};
\node[draw, circle] (4) at (-1,-7){$x_{4}$};
\node[draw, circle] (6) at (-2,-1){$x_{6}$};
\node[draw, circle] (7) at (0,-1){$x_{7}$};
\node[draw, circle] (8) at (0,-7){$x_{8}$};

\draw[->,line width=1pt] (7) to (2);
\draw[->,line width=1pt] (6) to (1);
\draw[->] (2) to (5);
\draw[->] (1) to (3);
\draw[->] (1) to (5);
\draw[->,line width=1pt] (5) -- (4);
\draw[->] (1) --(2);
\draw[->] (3) -- (4);
\draw[->] (2) to (3);
\draw[->,line width=1pt] (3) -- (8);

\node at (-2.5,-3) {\tiny{$w(x_{1})>1$}};
\node at (.5,-3) {\tiny{$w(x_{2})>1$}};
\node at (-2.5,-5) {\tiny{$1$}};
\node at (-1.5,-7) {\tiny{$1$}};
\node at (0.5,-5) {\tiny{$1$}};
\node at (0.5,-7) {\tiny{$1$}};
\node at (-2.5,-1) {\tiny{$1$}};
\node at (0.5,-1) {\tiny{$1$}};
}
\end{tikzpicture}
\caption{$I(D)$ is Cohen--Macaulay}\label{Ejemplo1}
\end{center}
\end{figure}
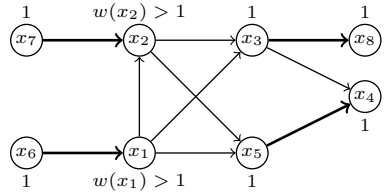
\end{example}


\bibliographystyle{amsplain}

\begin{thebibliography}{10}

\bibitem{digraphs} J. Bang-Jensen and G. Gutin, {\it Digraphs}. {\it Theory,
Algorithms and Applications\/}, Springer Monographs in Mathematics,
Springer, 2006.

\bibitem{Ivan-Reyes} I. D. Castrill\'on, R. Cruz and E. Reyes, On
well-covered, vertex decomposable and Cohen--Macaulay graphs, 
Electron. J. Combin. {\bf 23} (2016), no. 2, Paper 39, 17 pp.

\bibitem {EV}{M. Estrada and R. H. Villarreal, Cohen-Macaulay 
bipartite graphs, Arch. Math. {\bf 68} (1997), 124--128.}

 \bibitem{favaron}  O. Favaron, Very well-covered graphs, Discrete Math. {\bf 42} (1982), 
177--187.
 
 \bibitem{vivares} P. Gimenez, J. Mart\'inez-Bernal, A. Simis, R. H.
 Villarreal and 
C. E. Vivares, Symbolic powers of monomial ideals and
Cohen--Macaulay vertex-weighted digraphs, in {\it Singularities,
Algebraic Geometry, Commutative Algebra, and 
Related Topics\/} (G. M. Greuel, et.al. Eds), Springer, Cham, 2018, 
pp. 491--510. 

\bibitem{graphs-rings} I. Gitler and R. H. Villarreal, 
{\it Graphs, Rings and Polyhedra\/}, Aportaciones Mat. 
Textos, {\bf 35}, Soc. Mat. Mexicana, M\'exico, 2011.   

\bibitem{mac2} D. Grayson and M. Stillman, 
{\em Macaulay\/}$2$, 1996. 
Available via anonymous ftp from {\tt math.uiuc.edu}.

\bibitem{reyes-vila} H. T. H\'a, K. N. Lin, S. Morey, E. Reyes and
R. H. Villarreal, 
Edge ideals of oriented graphs, Internat. J. Algebra Comput. {\bf 29}
(2019), no. 3, 535--559.

\bibitem{Har}{F. Harary, {\it Graph Theory\/}, Addison-Wesley, 
Reading, MA, 1972.}

\bibitem{Radical-Herzog}{J. Herzog, 
Y. Takayama, N. Terai, 
\textit{On the radical of a monomial ideal}, 
Arch. Math. (Basel), {\bf 85}, (2005), 397--408.}



\bibitem{depth-monomial} J. Mart\'\i nez-Bernal, S. Morey, C. E.
Vivares and R. H. Villarreal, Depth and regularity 
of monomial ideals 
via polarization and combinatorial optimization,  
Acta Math. Vietnam. {\bf 44} (2019), no. 1, 243--268.


\bibitem{Mats}{H. Matsumura, {\it Commutative Ring Theory\/}, 
Cambridge
Studies in Advanced Mathematics {\bf 8}, 
Cambridge University Press, 1986.}

\bibitem{susan-reyes-vila} S. Morey, E. Reyes and R. H. Villarreal, 
Cohen--Macaulay, Shellable and unmixed clutters with a perfect
matching of K\"onig type, 
J. Pure Appl. Algebra {\bf 212} (2008), no. 7, 1770--1786.

\bibitem{PS}  C. Paulsen and S. Sather-Wagstaff, Edge ideals of
weighted graphs, J. Algebra Appl. {\bf 12} (2013), no. 5, 1250223. 
 

 \bibitem{WOG} Y. Pitones, E. Reyes and J. Toledo, 
Monomial ideals of weighted oriented graphs, Electron. J. Combin.
{\bf 26} (2019), no. 3, Paper 44, 18 pp.  
  
 \bibitem{disc-math} B. Randerath and P. D. Vestergaard, On
 well-covered graphs of odd girth $7$ or greater, Discuss. Math.
 Graph Theory {\bf 22} (2002), 159--172. 

\bibitem{ravindra} G. Ravindra, Well-covered graphs, 
J. Combinatorics Information Syst. Sci. {\bf 2} (1977), no. 1, 20--21. 

\bibitem{cm-graphs} R. H. Villarreal, Cohen--Macaulay graphs, Manuscripta
Math. {\bf 66} (1990), 277--293.

\bibitem{unmixed} R. H. Villarreal, Unmixed bipartite graphs, Rev. 
Colombiana Mat. {\bf 41} (2007), no. 2, 393--395. 

\bibitem{monalg-2} R. H. Villarreal, {\it Monomial Algebras, Second
Edition\/}, Monographs and Research Notes in Mathematics, Chapman and
Hall/CRC, 2015.  

\bibitem{Zhu-Xu} G. Zhu, L. Xu, H. Wang and Z. Tang, 
Projective dimensions and regularity of edge ideals of some weighted
oriented graphs, Rocky Mountain J. Math. {\bf 49} (2019), no. 4,
1391--1406.

\bibitem{Zhu-Xu-Wang-Zhang} G. Zhu, L. Xu, H. Wang and J. Zhang, 
Regularity and projective dimension of powers of edge ideal of the
disjoint union of some weighted oriented gap-free bipartite graphs.
Preprint, 2019, arxiv:1906.04682v1.
\end{thebibliography}

\end{document}